\documentclass[11pt]{article}

\usepackage[a4paper,margin=1in]{geometry}
\usepackage{graphicx}
\usepackage{amsmath,amssymb,amsthm,mathtools}
\usepackage{mathrsfs}
\usepackage{enumitem}
\usepackage{verbatim}
\usepackage{booktabs}
\usepackage{hyperref}

\hypersetup{
  colorlinks=true,
  linkcolor=blue,
  citecolor=blue,
  urlcolor=blue
}

\newtheorem{theorem}{Theorem}[section]
\newtheorem{proposition}[theorem]{Proposition}
\newtheorem{lemma}[theorem]{Lemma}
\newtheorem{corollary}[theorem]{Corollary}
\newtheorem{conjecture}[theorem]{Conjecture}
\newtheorem{problem}[theorem]{Problem}
\newtheorem{observation}[theorem]{Observation}

\theoremstyle{definition}
\newtheorem{definition}[theorem]{Definition}
\newtheorem{example}[theorem]{Example}
\newtheorem{remark}[theorem]{Remark}

\newcommand{\Rop}{\operatorname{Rop}}
\newcommand{\Thi}{\operatorname{Thi}}
\newcommand{\Len}{\operatorname{Len}}
\newcommand{\Isom}{\operatorname{Isom}}

\newcommand{\Reid}{\operatorname{Reid}}
\newcommand{\fin}{\operatorname{fin}}

\newcommand{\SC}{\operatorname{SC}}
\newcommand{\BFACF}{\operatorname{BFACF}}
\newcommand{\seed}{\mathrm{seed}}

\title{Merge Trees of Lattice Knots}

\author{Makoto Ozawa}
\date{}

\begin{document}

\maketitle

\begin{abstract}
We study 
\textit{lattice-filtered move graphs} as finite-state experimental models for
knot types, with emphasis on quantitative reconfiguration under a length cap.  At level $N$, vertices are lattice-polygon representatives of a
fixed knot type with lattice length at most $N$, modulo orientation-preserving
lattice isometries, and edges are prescribed local moves.  Connected components
of these graphs are discrete analogues of admissible components in
ropelength-filtered knot spaces.  The first level at which two initial
components become connected defines a discrete merge scale; after subtracting
the birth level, the resulting function is an ultrapseudometric whenever the
relevant initial components eventually merge.

The theoretical part is deliberately move-system independent.  We then
specialize to the simple cubic lattice and to the standard BFACF moves.  Here an
important classical theorem of Janse van Rensburg and Whittington is used
explicitly: the irreducibility classes of the BFACF moves on unrooted simple
cubic lattice polygons are precisely the knot types \cite{BFACF}.  Thus two
representatives of the same knot type are eventually connected when no length cap
is imposed.  Our quantitative question is different: what is the least length cap
under which a prescribed pair can be connected?  Standard BFACF moves are local
ambient PL isotopies, so explicit BFACF paths also provide finite isotopy
certificates; the distinction that remains essential is between global ergodicity
and connectivity under a fixed length cap.

The experimental part combines reproducible seed-generated computations with
complete enumerations of the minimal layers for the amphichiral knots $4_1$ and
$6_3$.  With reflections retained, the $152$ minimal $4_1$ classes split at
$N=30$ into four type-$0$ BFACF components of sizes $58,58,18,18$; exhaustive
search shows that all four lie in a single component at $N=32$.  Thus the
complete minimal-layer merge tree is $4\to1$ and every pair of distinct
birth-level components has excess-length barrier $2$.  For $6_3$, the $148$
minimal classes split at $N=40$ into twelve components of sizes
$39,39,10,10,9,9,7,7,6,6,3,3$.  They can be labelled
$A_{39},A_{10},A_9,A_7,A_6,A_3$ and
$B_{39},B_{10},B_9,B_7,B_6,B_3$ so that reflection exchanges
$A_r$ with $B_r$ for each $r\in\{39,10,9,7,6,3\}$.  At $N=42$ the six
$A$-components merge into a single component $A$, while the six $B$-components
merge into its reflected component $B$; each contains $74$ minimal classes and
$12337$ bounded states, and $A\cap B=\varnothing$ at this level.  A verified
BFACF path at $N=44$ joins $A$ and $B$.  Hence the complete $6_3$ minimal-layer
merge tree is $12\to2\to1$, with possible excess barriers $0,2,4$.  Explicit seed-to-mirror
certificates, independently checked by the Python verifier, realize the extremal
barriers $2$ for $4_1$ and $4$ for $6_3$.  Additional checks for the trefoil,
the five-crossing prime knots, and composite trefoil examples are included as
reproducibility tests and as evidence for further questions about local rigidity
in birth-level lattice move graphs.
\end{abstract}

\noindent\textbf{Keywords.}
Lattice knots, amphichiral knots, BFACF moves, reconfiguration barriers,
minimax length, filtered move graphs, ropelength, merge scales, finite recognition.

\medskip

\noindent\textbf{2020 Mathematics Subject Classification.}
57K10; 57K12; 05C10; 68R10; 49Q10.

\section{Introduction}

The ropelength-filtered approach to knot theory studies a knot type through the
geometry of its thick representatives.  For a knot type \(K\) and a parameter
\(\Lambda>0\), consider the space
\[
Y_\Lambda(K)
=
\{\gamma\in K\mid \Thi(\gamma)=1,\ \Len(\gamma)\leq \Lambda\}/\Isom^+(\mathbb R^3).
\]
Two representatives are called admissibly equivalent at scale \(\Lambda\) if they
can be joined by a path remaining in \(Y_\Lambda(K)\).  Thus the admissible
components are the path components of \(Y_\Lambda(K)\), and as \(\Lambda\)
increases these components form a persistence object.  The first level at which
the space is non-empty is the ropelength level \(\Rop(K)\), and the initial layer
\[
I(K)=Y_{\Rop(K)}(K)
\]
may be regarded as the ideal stratum of \(K\).  This continuous
ropelength-filtered framework, including ideal strata, admissible components,
and merge scales, was developed in \cite{OzawaIdealStratum}.  The present paper
is intended as an experimental, computable lattice analogue of that framework.
For the reader's convenience, the constructions needed below are recalled in
self-contained form; no result of \cite{OzawaIdealStratum} is used as a black box
except for the motivating terminology and comparison with the continuous theory.

This continuous theory is conceptually natural, but it is difficult to compute.
Even deciding the number of admissible components of \(Y_\Lambda(K)\) at a fixed
scale is generally out of reach.  Likewise, the merge scale of two components,
namely the first value of \(\Lambda\) at which they become joined in the filtered
space, is not directly computable from the definition.

The aim of this paper is to organize these ideas in a discrete model in which
they can be implemented by finite graph algorithms.  Move graphs themselves are
not new: Reidemeister graphs of knot diagrams have been studied systematically by
Barbensi and Celoria \cite{BarbensiCeloria}, and computational work on hard unknot
diagrams shows that local-move simplification may require a temporary increase in
crossing number \cite{BurtonHardUnknot}.  Likewise, in lattice-knot theory the
BFACF algorithm and minimal-length enumerations have a substantial history
\cite{BFACF,DiaoMinimal,DiaoSmallest,Scharein2009,MinimalKnotsRechnitzer}.
Our purpose is to combine a size filtration with these finite move graphs and to
measure exact minimax length barriers for prescribed lattice representatives.
The basic replacement is
\[
\text{admissible deformation}
\quad\rightsquigarrow\quad
\text{finite sequence of local moves},
\]
and
\[
\text{admissible component}
\quad\rightsquigarrow\quad
\text{connected component of a finite move graph}.
\]
The resulting framework is called \emph{discrete knot theory} in this paper.

The experimental point of view is essential here.  The simplest useful objects
are not only the full graphs \(G_N\), which may be too large to enumerate, but
also the subgraphs generated from specified seed representatives.  These
seed-generated graphs are reproducible finite approximations to local regions of
the filtered state space.  They allow one to ask concrete questions such as:
when do two mirror-related lattice representatives first become connected after
allowing temporary length expansion?

Our first informative test case is the figure-eight knot.  Earlier work already
enumerated minimal simple-cubic representatives and partitioned minimal layers
into classes under length-preserving (type-0) BFACF moves
\cite{Scharein2009}; consequently, disconnectedness inside a minimal layer is not
by itself a new phenomenon.  We ask the next minimax question globally over the
published minimal layer.  With reflections retained, the $152$ minimal $4_1$
classes form four type-$0$ components of sizes $58,58,18,18$ at $N=30$, and an
exhaustive level-$32$ search contains all $152$ minimal classes in one bounded
component.  Thus its complete minimal-layer merge tree is $4\to1$ at excess
length two.  For the amphichiral knot $6_3$, the $148$ minimal classes form
twelve type-$0$ components at $N=40$; exhaustive searches at $N=42$ show that
these merge into exactly two mirror-related components, and a verified BFACF
certificate joins those components at $N=44$.  Hence the complete minimal-layer
merge tree is $12\to2\to1$.  The resulting barrier hierarchy is a global
statement for the published minimal classes under the specified BFACF system and
proper-cubic quotient.  The numerical values remain lattice- and
move-system-dependent, and no universality beyond this model is claimed.

The main example uses lattice knots.  Let \(\mathcal L\) be a regular cubic
lattice, for instance the simple cubic lattice \(\mathbb Z^3\).  A lattice knot
is a self-avoiding closed polygon whose edges are lattice edges.  For a knot type
\(K\), let
\[
\mathcal P_N^{\mathcal L}(K)
=
\{\omega\subset \mathcal L
\mid
\omega\text{ is a lattice polygon of type }K,\ \ell(\omega)\leq N\}/\Isom(\mathcal L),
\]
where \(\ell(\omega)\) is the number of lattice edges.  If chirality is to be
kept fixed, one should quotient by the orientation-preserving lattice isometry
group; if mirror images are to be identified, one may quotient by the full
lattice isometry group.  Choose a finite set of local moves, for example
BFACF-type moves or cubulated moves.  We then define a
finite graph
\[
G_N^{\mathcal L}(K)
\]
whose vertices are the elements of \(\mathcal P_N^{\mathcal L}(K)\) and whose
edges are given by the allowed moves.  The connected components
\[
\pi_0(G_N^{\mathcal L}(K))
\]
are the lattice admissible components at level \(N\).

This construction should not be interpreted as a canonical discretization of
ropelength.  A lattice polygon has corners and does not literally satisfy
\(\Thi=1\) as a smooth curve.  Rather, the lattice-filtered graph is a computable
combinatorial surrogate for the ropelength-filtered space.  It captures the same
structural pattern: representatives are filtered by size, deformations are
constrained by the same size bound, and components merge when the size bound is
increased.

There is a second, equally important qualification.  The connected components
under a fixed cap are components for the chosen move system.  In the computations
below this move system is the standard BFACF system.  Globally, without a length
cap, BFACF is ergodic on each simple-cubic knot type \cite{BFACF}; hence every
pair of representatives of the same knot type eventually merges.  This global
ergodicity does \emph{not} imply connectivity under a prescribed cap \(N\), and
it does not make a seed-generated search exhaustive over the full bounded graph.
The quantity studied here is precisely the least cap at which connectivity first
occurs.  A different complete move system could produce a different minimax
barrier.

The paper also clarifies the relation between the present discrete theory and a
finite recognition theory based on diagrams and Reidemeister moves.  The three
levels are as follows:
\[
\begin{array}{ccl}
\text{continuous ropelength theory} &:& \text{geometry of }Y_\Lambda(K),\\
\text{finite knot theory} &:& \text{finite diagrammatic recognition data},\\
\text{discrete knot theory} &:& \text{finite state spaces and local move graphs}.
\end{array}
\]
Finite knot theory is a theory of finite certificates; discrete knot theory is a
theory of computable finite state models.  They are related but not identical.

\noindent\textbf{Organization.}
Sections~2--4 give the abstract filtered-graph formalism and its lattice
specialization.  Section~5 compares the continuous, finite-certificate, and
discrete-state-space viewpoints.  Section~6 records the conditional
PL-realizability statement needed to turn realized discrete paths into finite
Reidemeister certificates.  Section~8 explains the coarse relation with
ropelength-filtered spaces and states the convergence problem.  Sections~9 and
10 describe the prototype implementation and the reproducible seed-generated
experiments.  Section~11 records problems and experimental conjectures suggested
by the computations.

\subsection*{Main contributions}

The contributions of this paper are the following.

\begin{enumerate}[label=(\arabic*)]
\item We formulate a size-filtered local-move framework for knot
representatives and its seed-generated finite subgraphs, placing standard move
graphs inside a quantitative filtration.
\item We isolate the corresponding merge level and the equivalent minimax
reconfiguration height.  The associated ultrapseudometric is recorded as the
standard hierarchical structure forced by nested component mergers, rather than
as a phenomenon specific to lattice knots.
\item We specialize to the standard simple-cubic BFACF system, explicitly using
its classical ergodicity on fixed knot type \cite{BFACF}, and distinguish this
global fact from bounded-cap connectivity.
\item We relate the framework to earlier minimal-lattice enumerations and type-0
BFACF class decompositions \cite{Scharein2009,MinimalKnotsRechnitzer}; the new
computational target is the first higher length level at which prescribed
minimal-layer classes merge under the full BFACF move set.
\item For general local-move systems we state a PL-realizability criterion
under which move paths give finite diagrammatic certificates; for the standard
BFACF moves used here, the required local isotopy property is classical
\cite{BFACF}.
\item We report reproducible seed-generated computations for \(3_1\), \(4_1\),
\(4_1!\), \(6_3\), \(6_3!\), \(5_1\), \(5_2\), and composite trefoil examples.
\item We completely determine the BFACF merge tree of the published minimal
simple-cubic $4_1$ layer: four type-$0$ components of sizes $58,58,18,18$ at
$N=30$ merge simultaneously into one component at $N=32$.
\item We completely determine the corresponding minimal-layer merge tree for
$6_3$: twelve type-$0$ components of sizes
$39,39,10,10,9,9,7,7,6,6,3,3$ at $N=40$ merge into two mirror-related
components at $N=42$, which merge at $N=44$.
\item We retain explicit seed-to-mirror certificates realizing the extremal
barriers $2$ for $4_1$ and $4$ for $6_3$, and archive independently checkable
move paths for both.
\item We verify the knot types of all supplied seeds independently through
Alexander polynomials computed from generic projections, and we confirm the
\(6_3\) merge scale with a second, independently constructed minimal seed and
an independent search implementation.
\item We state experimental problems suggested by the data, including the
distribution and possible growth of BFACF mirror barriers over minimal
representatives and across knot types.
\end{enumerate}

\section{Abstract filtered local-move models}

We begin with an abstract formulation.  This keeps the main results independent
of the choice of lattice or move system.

\begin{definition}[Filtered local-move model]
Let \(K\) be a knot type.  A \emph{filtered local-move model} for \(K\) is a
triple
\[
\mathfrak D(K)=(\mathcal S(K),\mathcal M,\ell),
\]
where:
\begin{enumerate}[label=(\roman*)]
\item \(\mathcal S(K)\) is a countable set of finite representatives of \(K\);
\item \(\mathcal M\) is a symmetric set of local moves between elements of
\(\mathcal S(K)\), each move preserving the knot type \(K\);
\item \(\ell:\mathcal S(K)\to \mathbb N\) is a complexity function;
\item for every \(N\), the set
\[
\mathcal S_N(K)=\{x\in \mathcal S(K)\mid \ell(x)\leq N\}
\]
is finite.
\end{enumerate}
\end{definition}

The associated graph is defined as follows.

\begin{definition}[Filtered move graph]
For a filtered local-move model \(\mathfrak D(K)\), define the level-\(N\) graph
\[
G_N^{\mathfrak D}(K)
\]
by
\[
V(G_N^{\mathfrak D}(K))=\mathcal S_N(K),
\]
with an edge between \(x\) and \(y\) if and only if \(x\) and \(y\) differ by one
allowed move in \(\mathcal M\).
\end{definition}

\begin{definition}[Seed-generated component and explored subgraph]
Let \(x\in\mathcal S_N(K)\).  The \emph{seed-generated component} at level \(N\),
denoted
\[
 C_N^{\seed}(x)\subset G_N^{\mathfrak D}(K),
\]
is the connected component of \(x\) in \(G_N^{\mathfrak D}(K)\).  In an actual
computation one may stop after a vertex or time cap.  The resulting explored
subgraph will be denoted by
\[
 E_N^{\seed}(x)\subset C_N^{\seed}(x).
\]
Thus an exhaustive computation gives \(E_N^{\seed}(x)=C_N^{\seed}(x)\), whereas a
capped computation records only a certified finite part of the component.
\end{definition}

\begin{definition}[Seed-generated merge scale]
Let \(x,y\in I_{\mathfrak D}(K)\) be two chosen initial representatives.  The
\emph{seed-generated merge scale} \(m_{\seed}(x,y)\) is the first level \(N\) such
that \(x\) and \(y\) lie in the same component of \(G_N^{\mathfrak D}(K)\).  A
positive search certificate gives an upper bound, and an exhaustive separation
certificate at a level gives a lower bound.  If a capped search finds no path, the
outcome is recorded as inconclusive.
\end{definition}

Seed-generated data are weaker than complete computations of
\(G_N^{\mathfrak D}(K)\), but they are often the computationally accessible
objects.  Throughout the experimental sections below, all capped searches are
identified explicitly.

Since \(\mathcal S_N(K)\subset \mathcal S_{N+1}(K)\), the graphs form a nested
sequence of induced subgraphs:
\[
G_N^{\mathfrak D}(K)\subset G_{N+1}^{\mathfrak D}(K)\subset G_{N+2}^{\mathfrak D}(K)
\subset\cdots.
\]

\begin{definition}[Discrete admissible components]
The \emph{discrete admissible components} of \(K\) at level \(N\) in the model
\(\mathfrak D\) are the connected components of \(G_N^{\mathfrak D}(K)\):
\[
\pi_0^{\mathfrak D}(K;N)=\pi_0(G_N^{\mathfrak D}(K)).
\]
\end{definition}

\begin{proposition}[Component persistence]
The inclusions
\[
G_N^{\mathfrak D}(K)\subset G_{N+1}^{\mathfrak D}(K)
\]
induce maps
\[
\pi_0(G_N^{\mathfrak D}(K))\longrightarrow \pi_0(G_{N+1}^{\mathfrak D}(K)).
\]
Thus the discrete admissible components form a one-parameter persistence object.
\end{proposition}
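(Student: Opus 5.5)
The plan is to show that the inclusion $G_N^{\mathfrak D}(K)\subset G_{N+1}^{\mathfrak D}(K)$ is a graph homomorphism, and then use the fact that graph homomorphisms send connected components into connected components. Functoriality under composition will then give the persistence structure.

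\emph{Step 1: the inclusion is a graph map.} On vertices, $\ell(x)\le N$ implies $\ell(x)\le N+1$, so $\mathcal S_N(K)\subset\mathcal S_{N+1}(K)$. On edges, adjacency in $G_N^{\mathfrak D}(K)$ means that $x$ and $y$ differ by one allowed move in $\mathcal M$. This condition refers only to $x$, $y$ and $\mathcal M$, not to the level. Hence every edge of $G_N^{\mathfrak D}(K)$ is an edge of $G_{N+1}^{\mathfrak D}(K)$; indeed $G_N^{\mathfrak D}(K)$ is the induced subgraph on $\mathcal S_N(K)$.

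\emph{Step 2: the induced map on components.} Given a component $C\in\pi_0(G_N^{\mathfrak D}(K))$, pick any $x\in C$ and send $C$ to the component of $G_{N+1}^{\mathfrak D}(K)$ containing $x$. For well-definedness, take $x,x'\in C$. They are joined by a finite edge path in $G_N^{\mathfrak D}(K)$, and by Step~1 this same sequence is an edge path in $G_{N+1}^{\mathfrak D}(K)$. So $x$ and $x'$ lie in the same component at level $N+1$, and the image does not depend on the choice of $x$.

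\emph{Step 3: persistence structure.} For $M\ge N$, compose the maps from Step~2 to get $\pi_0(G_N^{\mathfrak D}(K))\to\pi_0(G_M^{\mathfrak D}(K))$. This composite coincides with the map induced directly by the inclusion $G_N^{\mathfrak D}(K)\subset G_M^{\mathfrak D}(K)$, since both send the class of $x$ to the class of $x$ at level $M$. The identity level induces the identity map. This gives a functor from $(\mathbb N,\le)$ to $\mathbf{Set}$, i.e.\ a one-parameter persistence object.

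No step is a real obstacle. The only point to watch is Step~1: adjacency must be independent of $N$, so that the level-$N$ graph is an honest subgraph. If the move relation were instead defined to require passing through larger intermediate states, this would fail, but under the definition given it is immediate.
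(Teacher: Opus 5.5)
Your proposal is correct and follows the same route as the paper, which observes that $G_N^{\mathfrak D}(K)$ is a subgraph of $G_{N+1}^{\mathfrak D}(K)$, so each component at level $N$ lies in a component at level $N+1$, and that functoriality is immediate. Your Steps 1--3 spell out these same points in more detail: adjacency does not depend on the level, the map is well defined via edge paths, and composites agree with the directly induced maps.
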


\begin{proof}
Every connected component of \(G_N^{\mathfrak D}(K)\) is contained in a connected
component of \(G_{N+1}^{\mathfrak D}(K)\), because the former graph is a subgraph
of the latter.  This gives the asserted map on connected components.  Functoriality
for successive inclusions is immediate.
\end{proof}

\section{Initial layers and merge scales}

The analogue of the ropelength birth level is the minimal complexity level.

\begin{definition}[Initial level and discrete ideal layer]
Let
\[
N_0^{\mathfrak D}(K)=\min\{N\in\mathbb N\mid \mathcal S_N(K)\neq\varnothing\}.
\]
The \emph{discrete ideal layer} is
\[
I_{\mathfrak D}(K)=\mathcal S_{N_0^{\mathfrak D}(K)}(K).
\]
The corresponding initial component set is
\[
\mathcal C_0^{\mathfrak D}(K)
=
\pi_0(G_{N_0^{\mathfrak D}(K)}^{\mathfrak D}(K)).
\]
\end{definition}

\begin{definition}[Discrete merge scale]
Let \(C,D\in \mathcal C_0^{\mathfrak D}(K)\).  The \emph{discrete merge scale} of
\(C\) and \(D\) is
\[
m_{\mathfrak D}(C,D)=
\min\{N\geq N_0^{\mathfrak D}(K)
\mid C\text{ and }D\text{ have the same image in }\pi_0(G_N^{\mathfrak D}(K))\},
\]
if such an \(N\) exists, and \(m_{\mathfrak D}(C,D)=\infty\) otherwise.
\end{definition}

In a globally complete move system every pair of initial components eventually
merges.  For the standard simple-cubic BFACF system this follows from the classical
ergodicity theorem \cite{BFACF}.  Allowing the value \(\infty\) is nevertheless
convenient in the abstract framework and for other move systems.

\begin{theorem}[Merge ultrapseudometric]
Assume that \(m_{\mathfrak D}(C,D)<\infty\) for all
\(C,D\in\mathcal C_0^{\mathfrak D}(K)\).  Then
\[
d_{\mathfrak D}(C,D)=m_{\mathfrak D}(C,D)-N_0^{\mathfrak D}(K)
\]
defines an ultrapseudometric on \(\mathcal C_0^{\mathfrak D}(K)\); that is,
\[
d_{\mathfrak D}(C,C)=0,
\]
\[
d_{\mathfrak D}(C,D)=d_{\mathfrak D}(D,C),
\]
and
\[
d_{\mathfrak D}(C,E)\leq
\max\{d_{\mathfrak D}(C,D),d_{\mathfrak D}(D,E)\}.
\]
\end{theorem}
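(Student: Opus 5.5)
The plan is to reduce everything to the persistence maps of the Component Persistence proposition. For each \(N\geq N_0=N_0^{\mathfrak D}(K)\), write
\[
\iota_N:\mathcal C_0^{\mathfrak D}(K)\to\pi_0(G_N^{\mathfrak D}(K))
\]
for the composite of the successive maps \(\pi_0(G_{M}^{\mathfrak D}(K))\to\pi_0(G_{M+1}^{\mathfrak D}(K))\), \(N_0\leq M<N\). Define the relation \(C\sim_N D\) to mean \(\iota_N(C)=\iota_N(D)\). Two properties of these relations will carry the whole proof.

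\emph{First property.} For each fixed \(N\), \(\sim_N\) is an equivalence relation, because it is the kernel relation of a map.

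\emph{Second property.} The relations are monotone: \(C\sim_N D\) implies \(C\sim_{N'}D\) for every \(N'\geq N\). This holds because \(\iota_{N'}\) factors through \(\iota_N\), which is the functoriality recorded in the proposition. Concretely, the component containing \(C\) and \(D\) at level \(N\) is a connected subgraph of \(G_{N'}^{\mathfrak D}(K)\).

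With these in hand, \(m_{\mathfrak D}(C,D)=\min\{N\geq N_0 : C\sim_N D\}\). This minimum exists by the finiteness hypothesis, and by monotonicity \(C\sim_N D\) holds exactly for \(N\geq m_{\mathfrak D}(C,D)\). Note also that \(d_{\mathfrak D}\geq 0\) since \(m_{\mathfrak D}\geq N_0\) by definition.

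The first two axioms are then immediate:

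1. \(d_{\mathfrak D}(C,C)=0\), since \(C\sim_{N_0}C\) gives \(m_{\mathfrak D}(C,C)=N_0\).
2. Symmetry follows from the symmetry of \(\sim_N\).

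For the strong triangle inequality, set \(M=\max\{m_{\mathfrak D}(C,D),m_{\mathfrak D}(D,E)\}\). By monotonicity, \(C\sim_M D\) and \(D\sim_M E\). By transitivity of \(\sim_M\), \(C\sim_M E\). Hence \(m_{\mathfrak D}(C,E)\leq M\), and subtracting \(N_0\) from both sides gives the inequality.

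The only step requiring any care is monotonicity, that is, ensuring that "same image" is stable under passing to larger \(N\). This is exactly where the nestedness \(\mathcal S_N(K)\subset\mathcal S_{N+1}(K)\), with \(G_N\) an induced subgraph of \(G_{N+1}\), is used. I would spell it out explicitly via path-connectedness: a path of moves in \(G_N\) remains a path in \(G_{N'}\).

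Finally, I would remark that \(d_{\mathfrak D}\) is only a pseudometric: \(d_{\mathfrak D}(C,D)=0\) with \(C\neq D\) cannot occur among distinct components of \(G_{N_0}\), so the level-\(N_0\) relation is equality. Thus it is in fact an ultrametric on \(\mathcal C_0^{\mathfrak D}(K)\). This is harmless, and the statement only claims the weaker property.
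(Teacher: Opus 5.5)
Your proof is correct and follows the paper's argument. The paper also sets \(M=\max\{m_{\mathfrak D}(C,D),m_{\mathfrak D}(D,E)\}\) and uses transitivity of ``same component'' in \(G_M^{\mathfrak D}(K)\); it relies tacitly on the monotonicity (images that coincide at level \(N\) coincide at every higher level) that you make explicit. Your closing remark is also right in substance: since \(\iota_{N_0}\) is the identity, distinct initial components have \(d_{\mathfrak D}>0\), so \(d_{\mathfrak D}\) is a genuine ultrametric. Its opening clause should therefore read ``not merely a pseudometric'' rather than ``only a pseudometric''.
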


\begin{proof}
Reflexivity and symmetry are immediate from the definition.  For the strong
triangle inequality, let
\[
A=m_{\mathfrak D}(C,D),\qquad B=m_{\mathfrak D}(D,E),
\]
and put \(M=\max\{A,B\}\).  At level \(M\), the images of \(C\) and \(D\) lie in
the same component, and the images of \(D\) and \(E\) lie in the same component.
Hence the images of \(C\) and \(E\) lie in the same component of
\(G_M^{\mathfrak D}(K)\).  Therefore
\[
m_{\mathfrak D}(C,E)\leq M.
\]
Subtracting \(N_0^{\mathfrak D}(K)\) gives the desired inequality.
\end{proof}

\begin{remark}
The proof uses only the nested nature of the filtered graphs.  Thus the
ultrapseudometric is not special to lattice knots; it is the familiar hierarchical
geometry associated with a nested family of connected-component partitions, as
in degree-zero persistence and merge-tree constructions; see, for example,
\cite{EdelsbrunnerHarer}.  The point here is not the abstract inequality itself,
but the knot-theoretic interpretation and the exact computation of the relevant
merge heights.
\end{remark}

\section{The lattice-filtered model}

We now specialize the abstract framework to lattice knots.

\subsection{Lattice polygons}

Let \(\mathcal L\) be a three-dimensional lattice, such as the simple cubic lattice
\(\mathbb Z^3\), the face-centered cubic lattice, or the body-centered cubic
lattice.  A \emph{lattice polygon} is a simple closed polygonal curve whose edges
are lattice edges.  Its \emph{lattice length} \(\ell(\omega)\) is the number of
lattice edges in \(\omega\).

For a knot type \(K\), let
\[
\mathcal P^{\mathcal L}(K)
=
\{\omega\subset\mathcal L\mid \omega\text{ is a lattice polygon of knot type }K\}/\Isom(\mathcal L),
\]
where \(\Isom(\mathcal L)\) denotes the group of lattice isometries.  For
\(N\in\mathbb N\), put
\[
\mathcal P_N^{\mathcal L}(K)=
\{\omega\in\mathcal P^{\mathcal L}(K)\mid \ell(\omega)\leq N\}.
\]

\begin{lemma}[Finiteness]
For each \(N\), the set \(\mathcal P_N^{\mathcal L}(K)\) is finite.
\end{lemma}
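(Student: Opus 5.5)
The plan is to show that, after quotienting by lattice isometries, every polygon of length at most \(N\) can be moved into a fixed bounded window of the lattice. Then only finitely many edge subsets are available.

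\emph{Step 1: normalization.} Fix a base vertex \(v_0\in\mathcal L\); for \(\mathbb Z^3\) take the origin. The lattice isometry group contains the translation subgroup of \(\mathcal L\), which acts transitively on lattice vertices. This holds for the simple cubic, face-centered cubic and body-centered cubic lattices, and it is the property of ``regular'' lattices we shall assume. So every lattice polygon \(\omega\) is equivalent under \(\Isom(\mathcal L)\) to a polygon passing through \(v_0\). Since the quotient is taken by this group (or by the orientation-preserving subgroup, which still contains all translations), each class in \(\mathcal P_N^{\mathcal L}(K)\) has a representative through \(v_0\).

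\emph{Step 2: bounded window.} A polygon through \(v_0\) with \(\ell(\omega)\le N\) is connected and consists of at most \(N\) lattice edges. Hence each of its vertices is joined to \(v_0\) by an edge path of at most \(N/2\) lattice edges. Let \(a\) be the maximal Euclidean edge length of \(\mathcal L\). Then \(\omega\) lies in the closed ball \(B(v_0,Na/2)\). Because \(\mathcal L\) is a lattice, it is discrete and locally finite. So this ball contains finitely many lattice vertices, and therefore a finite set \(E_N\) of lattice edges.

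\emph{Step 3: counting.} A lattice polygon is determined by its set of edges, which is a subset of \(E_N\). There are at most \(2^{|E_N|}\) such subsets, so there are finitely many normalized representatives. The classes of type \(K\) of length at most \(N\) form the image of a subset of this finite set under the quotient map. Hence \(\mathcal P_N^{\mathcal L}(K)\) is finite, with cardinality bounded by \(2^{|E_N|}\), independently of \(K\).

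The only step needing care is Step 1. It uses that the chosen isometry group acts transitively on vertices, and in particular that it contains the lattice translations. For a general abstract lattice one would instead use that there are only finitely many vertex orbits under \(\Isom(\mathcal L)\), pick one base vertex per orbit, and take the union of the corresponding finite windows. Local finiteness of \(\mathcal L\), meaning finitely many lattice points in any bounded region, is used in Step 2 and is immediate for the lattices under consideration.
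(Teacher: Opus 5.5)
Your proof is correct and takes essentially the same approach as the paper: use translations so that the polygon passes through a fixed base vertex, then observe that only finitely many normalized polygons of length at most \(N\) exist. The only difference is cosmetic: the paper counts edge-direction sequences of length at most \(N\) that close up, whereas you confine the polygon to a bounded Euclidean window and count subsets of the finitely many edges there, and both arguments finish by passing to the quotient.
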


\begin{proof}
A lattice polygon of length at most \(N\) has at most \(N\) edges.  After
quotienting by translations, one may place one vertex at the origin.  There are
only finitely many edge-direction sequences of length at most \(N\) in a fixed
lattice, and only finitely many of these close up and give self-avoiding
polygons.  Quotienting further by the finite point group of the lattice preserves
finiteness.
\end{proof}

\subsection{Local move systems}

There are several natural choices of local moves.

\begin{example}[BFACF moves]
In the simple cubic lattice, the standard BFACF moves are reversible local
modifications of self-avoiding lattice polygons.  They consist of neutral moves
and moves changing the length by \(\pm2\).  Each move preserves knot type, and a
classical theorem of Janse van Rensburg and Whittington states that the
irreducibility classes of the BFACF moves on unrooted simple-cubic polygons are
exactly the knot types \cite{BFACF}.  Thus any two simple-cubic polygons of the
same knot type are connected by a finite BFACF path when no length cap is imposed.
\end{example}

\begin{remark}[Global ergodicity versus bounded connectivity]
The BFACF ergodicity theorem is global and does not answer the filtered question
studied here.  A connecting BFACF path may have to pass through polygons longer
than both endpoints.  Consequently, for a fixed cap \(N\), the graph
\(G_N^{\SC,\BFACF}(K)\) can have several components even though their union is
connected in the unbounded BFACF state space.  Likewise, exploring from a finite
seed set under the cap \(N\) computes only the components reachable from those
seeds; it does not enumerate every vertex of the full bounded graph unless an
external exhaustive enumeration is supplied.  These are the two distinctions
needed in the experiments below.
\end{remark}

\begin{example}[Cubulated moves]
For cubic knots, cubulated moves form a discrete analogue of Reidemeister moves.
In this setting, isotopy of cubic knots can be characterized by the existence of
a finite sequence of cubulated moves \cite{CubulatedMoves}.  This example is
included to emphasize that completeness is a property of a chosen move system and
must be cited or proved separately from knot-type preservation of individual
moves.
\end{example}

\begin{definition}[Lattice-filtered move graph]
Fix a lattice \(\mathcal L\) and a local move system \(\mathcal M_{\mathcal L}\).
The \emph{lattice-filtered move graph} of \(K\) at level \(N\) is the graph
\[
G_N^{\mathcal L,\mathcal M}(K)
\]
with vertex set \(\mathcal P_N^{\mathcal L}(K)\) and with an edge between two
vertices if they differ by one move in \(\mathcal M_{\mathcal L}\).
\end{definition}

By the finiteness lemma, \(G_N^{\mathcal L,\mathcal M}(K)\) is a finite graph.
Thus its connected components and merge scales are computable by finite graph
algorithms, at least in principle.

\begin{definition}[Lattice admissible components]
The \emph{lattice admissible components} of \(K\) at lattice length \(N\) are
\[
\pi_0^{\mathcal L,\mathcal M}(K;N)
=
\pi_0(G_N^{\mathcal L,\mathcal M}(K)).
\]
\end{definition}

\begin{definition}[Minimal lattice layer]
Let
\[
n_{\mathcal L}(K)=
\min\{N\mid \mathcal P_N^{\mathcal L}(K)\neq\varnothing\}.
\]
The \emph{minimal lattice layer} is
\[
I_{\mathcal L}(K)=\mathcal P_{n_{\mathcal L}(K)}^{\mathcal L}(K).
\]
This is the lattice analogue of the ropelength ideal stratum.
\end{definition}

\begin{definition}[Lattice merge scale]
For two components
\[
C,D\in \pi_0(G_{n_{\mathcal L}(K)}^{\mathcal L,\mathcal M}(K)),
\]
define
\[
m_{\mathcal L,\mathcal M}(C,D)=
\min\{N\geq n_{\mathcal L}(K)
\mid C,D\text{ have the same image in }\pi_0(G_N^{\mathcal L,\mathcal M}(K))\},
\]
if such an \(N\) exists.
\end{definition}

\begin{corollary}[Lattice merge ultrapseudometric]
If every pair of initial lattice components eventually merges, then
\[
d_{\mathcal L,\mathcal M}(C,D)
=
m_{\mathcal L,\mathcal M}(C,D)-n_{\mathcal L}(K)
\]
defines an ultrapseudometric on the set of components of the minimal lattice
layer.
\end{corollary}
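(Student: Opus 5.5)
The plan is to deduce the corollary directly from the abstract Merge ultrapseudometric theorem of Section~3, by exhibiting the lattice-filtered construction as a particular filtered local-move model. Concretely, I would take \(\mathcal S(K)=\mathcal P^{\mathcal L}(K)\), take \(\mathcal M=\mathcal M_{\mathcal L}\), and take the complexity function \(\ell\) to be lattice length. The one point to be careful about is that lattice length is well defined on isometry classes, since lattice isometries carry lattice edges to lattice edges.

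First I will check the axioms of a filtered local-move model.
\begin{enumerate}[label=(\roman*)]
\item Countability of \(\mathcal S(K)\) holds because \(\mathcal P^{\mathcal L}(K)=\bigcup_N\mathcal P_N^{\mathcal L}(K)\) is a countable union of finite sets, by the Finiteness lemma.
\item The moves \(\mathcal M_{\mathcal L}\) are symmetric and knot-type preserving; this is part of what is meant by a local move system (for BFACF, see the example above).
\item Finiteness of each \(\mathcal S_N(K)\) is exactly the Finiteness lemma.
\end{enumerate}

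Next I will match the constructions.
\begin{itemize}
\item By definition, \(G_N^{\mathfrak D}(K)=G_N^{\mathcal L,\mathcal M}(K)\).
\item The birth level agrees: \(N_0^{\mathfrak D}(K)=n_{\mathcal L}(K)\).
\item The initial component sets coincide.
\item The merge scales agree: \(m_{\mathfrak D}=m_{\mathcal L,\mathcal M}\).
\end{itemize}
The hypothesis that every pair of initial components eventually merges is then precisely the finiteness hypothesis of the theorem. The theorem therefore yields reflexivity, symmetry and the strong triangle inequality for \(d_{\mathcal L,\mathcal M}=m_{\mathcal L,\mathcal M}-n_{\mathcal L}(K)\).

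There is no real obstacle. The only mildly delicate point is the well-definedness of vertices and moves on isometry classes: a move between representatives must descend to an edge between classes, and the knot type must be invariant under the chosen isometry group. If one prefers to avoid this check, the proof of the theorem can be repeated verbatim, since it uses only the nesting \(G_N^{\mathcal L,\mathcal M}(K)\subset G_{N+1}^{\mathcal L,\mathcal M}(K)\). Explicitly:
\begin{itemize}
\item Put \(M=\max\{m(C,D),m(D,E)\}\).
\item Components only coarsen as the level increases, so \(C,D\) and \(D,E\) both share components at level \(M\).
\item Transitivity of connectivity in \(G_M^{\mathcal L,\mathcal M}(K)\) gives \(m(C,E)\le M\).
\item Subtracting \(n_{\mathcal L}(K)\) gives the strong triangle inequality.
\end{itemize}
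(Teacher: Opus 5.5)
Your proposal is correct and matches the paper's proof, which is the one-line application of the abstract merge-ultrapseudometric theorem to the filtered local-move model \((\mathcal P^{\mathcal L}(K),\mathcal M_{\mathcal L},\ell)\). Your explicit check of the model axioms via the Finiteness lemma, and your fallback argument that states the persistence step openly, simply fill in details the paper leaves implicit.
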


\begin{proof}
This is a direct application of Theorem 3.3 to the filtered local-move model
\(\mathfrak D=(\mathcal P^{\mathcal L}(K),\mathcal M_{\mathcal L},\ell)\).
\end{proof}

\section{Continuous, finite, and discrete knot theories}

We now clarify the relation among the three theories.

\subsection{Continuous ropelength theory}

The continuous theory studies the spaces
\[
Y_\Lambda(K)
=
\{\gamma\in K\mid \Thi(\gamma)=1,\ \Len(\gamma)\leq\Lambda\}/\Isom^+(\mathbb R^3).
\]
This is the continuous model introduced in \cite{OzawaIdealStratum} that
motivates the lattice-filtered construction studied here.  Its basic objects are:
\begin{enumerate}[label=(\roman*)]
\item the ideal stratum \(I(K)=Y_{\Rop(K)}(K)\);
\item admissible components \(\pi_0(Y_\Lambda(K))\);
\item merge scales between components;
\item persistence of components as \(\Lambda\) increases.
\end{enumerate}
This is the geometric source of the theory.

\subsection{Finite knot theory}

In this paper, the term \emph{finite knot theory} is used as a descriptive label
for finite recognition data extracted from continuous or discrete
representatives; it is not meant to denote a previously established independent
subfield.  A typical construction fixes a generic direction \(u\in S^2\)
and sends a representative \(\gamma\) to its diagram \(D_u(\gamma)\).  For a
generic one-parameter admissible deformation, the diagram changes by a finite
sequence of Reidemeister moves.  Thus a path in \(Y_\Lambda(K)\) yields a finite
diagrammatic certificate.

This leads to ropelength-filtered Reidemeister graphs
\[
G_{\Lambda,u}^{\Reid}(K),
\]
whose vertices are diagrams arising from representatives in \(Y_\Lambda(K)\) and
whose edges are Reidemeister moves appearing under generic deformations.  The
exact construction may vary depending on the chosen equivalence relation on
diagrams, but the role of the graph is fixed: it records finite recognition data
below the scale \(\Lambda\).

\subsection{Discrete knot theory}

Discrete knot theory replaces the continuous space itself by a finite move graph.
The model is not merely a certificate extracted from a path; it is a finite state
space whose connected components are directly computable.

Thus the conceptual distinction is:
\[
\text{finite knot theory} = \text{finite recognition data},
\]
whereas
\[
\text{discrete knot theory} = \text{finite state-space model}.
\]

\subsection{Correspondence diagram}

The relations can be summarized schematically by the diagram
\[
\begin{array}{ccc}
Y_\Lambda(K)
& \xrightarrow{\text{generic projection}} &
G_{\Lambda,u}^{\Reid}(K)
\\[6pt]
\downarrow\scriptstyle{\text{lattice discretization}}
&&
\uparrow\scriptstyle{\text{finite certificate}}
\\[6pt]
G_N^{\mathcal L,\mathcal M}(K)
& \xrightarrow{\text{projection}} &
G_{M(N),u}^{\fin}(K).
\end{array}
\]
Here \(G_{M(N),u}^{\fin}(K)\) denotes a finite diagrammatic graph with complexity
bounded by a function \(M(N)\).

The diagram is not intended as a commutative diagram in a category of graphs.  In
particular, the lower horizontal arrow is not asserted to be a graph homomorphism
on all vertices and edges.  It means that a realized lattice path, when projected
generically, produces finite diagrammatic data and hence a finite certificate.
The arrows have different meanings:
\begin{enumerate}[label=(\roman*)]
\item \(Y_\Lambda(K)\to G_{\Lambda,u}^{\Reid}(K)\) is a projection to finite diagrammatic data.
\item \(Y_\Lambda(K)\to G_N^{\mathcal L,\mathcal M}(K)\) is a discretization or surrogate construction.
\item \(G_N^{\mathcal L,\mathcal M}(K)\to G_{M(N),u}^{\fin}(K)\) sends lattice paths to finite Reidemeister certificates.
\item The reverse direction from finite data to lattice data is a realization problem.
\end{enumerate}

\section{Discrete paths as finite certificates}

A key advantage of the lattice model is that an explicitly realized discrete path
can be converted into finite diagrammatic data.  The following formulation makes
the required hypothesis on the local move system explicit.

\begin{definition}[PL-realizable local move system]
A local move system \(\mathcal M_{\mathcal L}\) is called \emph{PL-realizable} if
every allowed local move \(\omega\leadsto\omega'\) admits an ambient PL isotopy
\(H_t\) of \(\mathbb R^3\), supported in a ball meeting the polygon only in the
local move region, such that \(H_0(\omega)=\omega\), \(H_1(\omega)=\omega'\), and
\(H_t(\omega)\) is an embedded polygonal knot for all \(t\).  This is an
additional hypothesis on the move system, not a formal consequence of
self-avoidance of the endpoints.
\end{definition}

\begin{remark}[BFACF and cubulated moves]
The cubulated move system of Hinojosa--Verjovsky--Verjovsky Marcotte
\cite{CubulatedMoves} provides a natural example of a PL-realizable local move
system in the sense of the preceding definition.  Standard BFACF moves are also
local polygonal deformations realized by ambient isotopy; the proof of BFACF
ergodicity is built from such local isotopies \cite{BFACF}.  Thus the explicit
BFACF certificates below may be interpreted as finite PL-isotopy certificates.
For an arbitrary implementation or a different move system, however, the
PL-realizability hypothesis should still be checked against the precise move
rules being used.
\end{remark}

\begin{theorem}[Discrete-to-finite projection]\label{thm:discrete-to-finite}
Let \(\mathcal M_{\mathcal L}\) be PL-realizable, and let
\(\omega_0,\omega_1,\ldots,\omega_r\) be a path in
\(G_N^{\mathcal L,\mathcal M}(K)\).  For a generic projection direction \(u\), the
projected diagrams
\[
D_u(\omega_0),D_u(\omega_1),\ldots,D_u(\omega_r)
\]
are related by a finite sequence of Reidemeister moves after subdividing each
local move into a generic one-parameter PL isotopy.  Consequently, every realized
discrete admissible path gives a finite Reidemeister certificate.
\end{theorem}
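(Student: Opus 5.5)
The plan is to concatenate the local PL isotopies supplied by PL-realizability into one global PL isotopy, put that isotopy in general position with respect to a projection direction, and then invoke the classical fact that a generic one-parameter PL family of polygonal knots changes its diagram only by finitely many Reidemeister moves and planar isotopies.

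\textbf{Step 1: build a global isotopy.} For each edge \(\omega_{i-1}\leadsto\omega_i\) of the path, PL-realizability gives an ambient PL isotopy \(H^{(i)}_t\) carrying \(\omega_{i-1}\) to \(\omega_i\) through embedded polygons. The vertices of \(G_N^{\mathcal L,\mathcal M}(K)\) are classes modulo lattice isometries. I will therefore fix a concrete representative of \(\omega_0\) in \(\mathbb R^3\) and choose each subsequent representative so that the move is realized literally in space; an isometry can be absorbed by composing with a rigid motion. Reparametrizing \(H^{(i)}\) on \([(i-1)/r,i/r]\) and concatenating gives a PL isotopy \(\Phi_t\), \(t\in[0,1]\), with \(\Phi_{i/r}(\omega_0)=\omega_i\). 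Each \(\Phi_t(\omega_0)\) is an embedded polygon with a bounded number of vertices, since each local isotopy is supported in a ball and is PL.

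\textbf{Step 2: choose \(u\) and perturb.} Choose \(u\) outside the finitely many closed measure-zero sets of bad directions at the finitely many times \(t=i/r\), and at the vertex-collision events. Then each \(D_u(\omega_i)\) is a regular diagram. Next, perturb the vertex trajectories of \(\Phi_t\) rel the endpoints \(t=i/r\), keeping them piecewise linear in \(t\). The perturbation must be small enough to preserve embeddedness, which is an open condition along the compact parameter interval. After it, the projected family fails to be regular only at finitely many times, and at each such time exactly one codimension-one degeneracy occurs: a vertex projecting onto an edge (tangency, giving R2), an edge projected to a point (a cusp, giving R1), or a triple point (R3). Finiteness follows because the degenerate locus is a semialgebraic, indeed piecewise polynomial, condition in \(t\) on each linear piece, so a transverse family meets it in isolated points.

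\textbf{Step 3: read off the certificate.} Between consecutive critical times the diagram changes only by planar isotopy. Across each critical time it changes by a single Reidemeister move, by the standard local analysis of the PL Reidemeister theorem. Restricting to \([(i-1)/r,i/r]\) gives a finite sequence from \(D_u(\omega_{i-1})\) to \(D_u(\omega_i)\), and concatenating these gives the certificate.

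\textbf{Main obstacle.} The hard part is Step 2: proving that a generic PL perturbation of an arbitrary PL ambient isotopy has only finitely many isolated codimension-one singular events, without destroying the endpoint polygons. I would first reduce to isotopies that are piecewise linear in \(t\) on vertex coordinates, which is achievable by subdividing each local move into elementary triangle (\(\Delta\)) moves. This reduces the claim to the classical general-position argument of Reidemeister and Alexander–Briggs for \(\Delta\)-moves, which I would cite rather than reprove.
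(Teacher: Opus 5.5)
Your proposal is correct and is essentially the paper's proof: concatenate the local PL isotopies, choose a generic direction $u$, perturb the family rel the move endpoints, and apply the PL Reidemeister theorem on each move interval; your handling of the isometry quotient and your optional reduction to $\Delta$-moves only make explicit what the paper leaves implicit (the paper simply notes that, after subdividing time, the concatenated family is linear on finitely many subintervals). One small correction to your event list: for fixed generic $u$, an edge becoming parallel to $u$ is a codimension-two event that a generic one-parameter PL family avoids (the paper excludes such directions when choosing $u$); R1 moves actually occur when two consecutive projected edges fold over each other, i.e.\ a vertex's projection passes over the projection of the next-but-one edge, and this already falls under your ``vertex projects onto an edge'' case.
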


\begin{proof}
For each edge \(\omega_i\leadsto\omega_{i+1}\) choose the PL ambient isotopy
provided by the PL-realizability hypothesis.  Concatenating these finitely many
isotopies gives a PL one-parameter family from \(\omega_0\) to \(\omega_r\).  After
subdividing the time interval, this family is linear on finitely many parameter
subintervals and has only finitely many vertices and edges at each time.

Choose a projection direction outside the finite union of exceptional directions
where an edge is projected to a point or where two non-adjacent edges have a
non-isolated projected overlap.  A further arbitrarily small generic perturbation
of the PL isotopy, fixed at the endpoints of the move intervals, makes the
projected one-parameter family generic in the usual Reidemeister sense: away from
finitely many times the diagram is regular, and at each singular time exactly one
standard local event occurs.  By the PL version of Reidemeister's theorem
\cite{Reidemeister,BurdeZieschang}, the change in the diagram over each move
interval is therefore a finite sequence of Reidemeister moves.  Since the lattice
path has finitely many edges, the concatenated projected change is finite.
\end{proof}

\begin{corollary}[Discrete merge certificates]
Suppose two initial lattice components \(C,D\) merge at level \(N\), that is,
\[
m_{\mathcal L,\mathcal M}(C,D)=N,
\]
and suppose the chosen local move system is PL-realizable.  Then, for a generic
projection direction \(u\), any chosen lattice path realizing the merge projects
to a finite Reidemeister certificate.
\end{corollary}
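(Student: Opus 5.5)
The plan is to reduce the corollary to Theorem~\ref{thm:discrete-to-finite} by producing an actual path in \(G_N^{\mathcal L,\mathcal M}(K)\) that realizes the merge.

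\textbf{Step 1: extract a path.} By the definition of the lattice merge scale, \(m_{\mathcal L,\mathcal M}(C,D)=N\) means that \(C\) and \(D\) have the same image in \(\pi_0(G_N^{\mathcal L,\mathcal M}(K))\). The image of \(C\) is the component of \(G_N\) containing \(C\), and Proposition~2.6 (component persistence) guarantees this is well defined. So pick vertices \(\omega_0\in C\) and \(\omega_r\in D\), both in the minimal layer \(I_{\mathcal L}(K)\subset\mathcal P_N^{\mathcal L}(K)\). They lie in one connected component of the finite graph \(G_N^{\mathcal L,\mathcal M}(K)\). Hence there is a finite edge path \(\omega_0,\omega_1,\ldots,\omega_r\) whose vertices all have length at most \(N\); for example, take a shortest path found by breadth-first search. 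Any path chosen in this way, or supplied as a "chosen lattice path realizing the merge", is a path in \(G_N^{\mathcal L,\mathcal M}(K)\).

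\textbf{Step 2: apply the theorem.} Since \(\mathcal M_{\mathcal L}\) is assumed PL-realizable, Theorem~\ref{thm:discrete-to-finite} applies to this path. For a generic direction \(u\), the diagrams \(D_u(\omega_0),\ldots,D_u(\omega_r)\) are joined by a finite sequence of Reidemeister moves, and this sequence is the finite Reidemeister certificate.

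\textbf{Remaining care.} The genericity condition on \(u\) must hold for the whole path. It is a finite union of exceptional conditions, one for each of the finitely many move intervals, so a common generic \(u\) exists. The corollary's phrase "for a generic \(u\)" is compatible with \(u\) depending on the chosen path. This is the only mildly delicate point, and it is already handled within the proof of the theorem. Nothing about minimality of \(N\) is needed beyond existence of the path: minimality only says the certificate lives at the merge level.
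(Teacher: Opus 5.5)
Your proposal is correct and follows the paper's proof: the merge at level \(N\) gives a finite path in \(G_N^{\mathcal L,\mathcal M}(K)\) between representatives of \(C\) and \(D\), and the discrete-to-finite projection theorem is applied to that path. Your added remarks, that a common generic \(u\) exists for the whole path and that minimality of \(N\) is not needed beyond the existence of the path, spell out points the paper leaves implicit.
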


\begin{proof}
A merge at level \(N\) is represented by a finite path in
\(G_N^{\mathcal L,\mathcal M}(K)\) connecting a representative of \(C\) to a
representative of \(D\).  Apply Theorem~\ref{thm:discrete-to-finite} to this path.
\end{proof}

\begin{remark}
One may seek uniform bounds of the form
\[
M(N)\leq C N^2
\]
for diagrammatic crossing complexity, since a polygon with \(N\) edges has at
most quadratically many pairs of edges.  For move paths, however, the number of
Reidemeister moves also depends on the length of the chosen path and on the
projection direction.  Thus the most robust statement is the existence of a
finite certificate for a realized path, not a sharp universal complexity bound.
\end{remark}

\section{From finite data back to discrete models}

The reverse direction is possible but should be formulated as a realization
principle rather than as an equivalence of theories.

\begin{definition}[Finite-to-discrete realization property]
A finite diagrammatic model is said to have the \emph{finite-to-discrete
realization property} with respect to a lattice model
\(\mathfrak D=(\mathcal P^{\mathcal L}(K),\mathcal M_{\mathcal L},\ell)\) if the
following hold.
\begin{enumerate}[label=(\roman*)]
\item Every diagram in the finite model can be realized by a lattice polygon.
\item Every Reidemeister move in the finite model can be realized by a finite
sequence of lattice local moves in the chosen move system.
\item There is a complexity-control function \(Q\) such that diagrams of finite
complexity at most \(M\) are realized by lattice polygons of length at most
\(Q(M)\).
\end{enumerate}
\end{definition}

\begin{proposition}[Conditional finite-to-discrete lifting]
Assume the finite-to-discrete realization property.  Then any finite
Reidemeister path of diagrammatic complexity at most \(M\) lifts, relative to the
chosen realization scheme, to a path in
\(G_{Q'(M)}^{\mathcal L,\mathcal M}(K)\) for some complexity-control function
\(Q'\).
\end{proposition}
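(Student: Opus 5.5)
The plan is to build the lifted path one Reidemeister step at a time, using the three clauses of the finite-to-discrete realization property, and then bound all the lattice lengths that appear by a single function of \(M\).

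Let \(D_0\to D_1\to\cdots\to D_s\) be a Reidemeister path in which every \(D_j\) has complexity at most \(M\).

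\emph{Realizing the diagrams.} By clause (i) each \(D_j\) is realized by a lattice polygon \(\omega_j\). By clause (iii) we may take \(\ell(\omega_j)\leq Q(M)\). Each \(\omega_j\) has knot type \(K\), since \(D_j\) is a diagram of \(K\).

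\emph{Realizing the moves.} By clause (ii), each move \(D_j\to D_{j+1}\) is realized by a finite sequence of lattice local moves. This sequence joins \(\omega_j\) to \(\omega_{j+1}\); if the realization scheme produces different polygons at the junctions, we insert the scheme's own connecting path. Each local move preserves the knot type, so every intermediate polygon lies in \(\mathcal P^{\mathcal L}(K)\). Concatenating these sequences in order gives a path in the unfiltered move graph from \(\omega_0\) to \(\omega_s\).

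\emph{Controlling the length.} This is the main obstacle. Clause (ii) guarantees only finiteness, not a length bound on the intermediate polygons. To get a bound depending only on \(M\), we use that there are only finitely many diagram isomorphism classes of complexity at most \(M\), and only finitely many Reidemeister moves among them (up to planar isotopy). For each such move, fix the realizing lattice sequence chosen by the scheme and record its maximal length. Define \(Q'(M)\) as the maximum of \(Q(M)\) and all these finitely many maxima.

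Two points make this work. First, the realization has to be made canonical: the same diagram move always lifts to the same lattice sequence, up to lattice isometry, which preserves length. This is why the statement says "relative to the chosen realization scheme." Second, the finiteness count needs complexity to bound the number of crossings, and this is how diagrammatic complexity is used throughout. If complexity were unbounded in some other direction, we would instead take \(Q'\) as part of the hypothesis, as a strengthened clause (iii) covering moves.

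With this choice, every polygon in the concatenated path has length at most \(Q'(M)\). The path therefore lies in \(G_{Q'(M)}^{\mathcal L,\mathcal M}(K)\), because this is the induced subgraph on all polygons of length at most \(Q'(M)\).
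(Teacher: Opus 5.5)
Your proposal follows the paper's proof: realize each diagram by clause (i) (with clause (iii) bounding its length), realize each Reidemeister move by clause (ii), concatenate, and take the maximum lattice length over the resulting finite set of intermediate polygons. Your additional step---fixing one realization for each of the finitely many planar-isotopy classes of diagrams and moves of complexity at most \(M\), so that the maximum is taken over a set that does not depend on the particular path---spells out what the paper compresses into the phrase ``the assumed complexity control provides a function \(Q'\),'' and is a clarification of the same argument rather than a different route.
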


\begin{proof}
Realize each diagram in the path by a lattice polygon using condition (i).
Realize each Reidemeister move by a finite sequence of lattice local moves using
condition (ii).  Taking the maximum lattice length over the finitely many
intermediate lattice polygons gives a bound.  The assumed complexity control
provides a function \(Q'\) depending only on the chosen finite complexity bound
and the realization scheme.
\end{proof}

\begin{remark}
This statement is deliberately conditional.  It avoids asserting that finite knot
theory and discrete knot theory are equivalent.  In particular, condition (ii) is
a substantive assumption for any specific local move system.  We do not claim
here that arbitrary Reidemeister moves lift to BFACF paths with controlled
length.  The correct interpretation is that finite diagrammatic data can often be
implemented in sufficiently fine or sufficiently large lattice models, but the
construction requires explicit realization and complexity control.
\end{remark}

\section{Relation to ropelength-filtered spaces}

We now indicate how the lattice-filtered model relates to the original
ropelength-filtered space.

\subsection{Lattice polygons as thick representatives}

A raw lattice polygon is not a smooth curve and has corners.  To compare it with
ropelength theory, one may round the corners and thicken the polygon.  To avoid
collisions under smoothing, it is useful to impose a discrete thickness condition.

\begin{definition}[Combinatorially thick lattice polygon]
Let \(\rho>0\).  A lattice polygon \(\omega\) is called \emph{\(\rho\)-thick} if
there are pairwise disjoint closed regular neighborhoods of radius \(\rho\) for
all non-adjacent edges and pairwise disjoint corner balls in which the incident
edge neighborhoods meet only in the prescribed adjacent way.  This condition is
part of the hypothesis; it is not automatic for arbitrary lattice polygons.
\end{definition}

\begin{proposition}[Smoothing estimate]
Fix \(\rho>0\) and a smoothing convention inside each allowed corner ball.  There
is a constant \(A_\rho>0\) such that every \(\rho\)-thick lattice polygon
\(\omega\) of lattice length \(N\) admits a smooth representative
\(\gamma_\omega\) of the same knot type with
\[
\Thi(\gamma_\omega)=1,
\qquad
\Len(\gamma_\omega)\leq A_\rho N.
\]
\end{proposition}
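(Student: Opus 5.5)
The plan is to build $\gamma_\omega$ in three stages: round the corners of $\omega$ using the fixed convention, check that the result has thickness bounded below by a constant $c_\rho>0$ depending only on $\rho$ and the convention, and then rescale by $1/c_\rho$. This last step is what forces $\Thi=1$. Thickness and length both scale linearly under dilation, so the final length is $\Len(\widetilde\gamma)/c_\rho$, where $\widetilde\gamma$ is the rounded curve. It therefore suffices to show two things: $\Len(\widetilde\gamma)\le N$, and $\Thi(\widetilde\gamma)\ge c_\rho$. We then take $A_\rho=1/c_\rho$.

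\textbf{Rounding step.} In the simple cubic lattice the angle at each vertex of $\omega$ is either straight or a right angle. So the smoothing convention need only specify one local model. Inside each corner ball of radius $r\le\rho$, replace the two incident half-edge segments by a fixed smooth (say $C^{1,1}$) arc, for instance a quarter circle of radius $r$ joined tangentially, or a smoothed version of it. The arc is tangent to the edges at the boundary of the ball. Replacing a right-angled path of length $2r$ by a quarter circle of length $\pi r/2$ shortens the curve, so $\Len(\widetilde\gamma)\le\ell(\omega)=N$.

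\textbf{Knot type.} Each modification is supported in a corner ball that, by hypothesis, meets $\omega$ only in the two incident edge pieces. Straight-line homotopy from the corner path to the arc inside that convex ball is therefore an ambient isotopy supported in the ball. Hence $\widetilde\gamma$ has the same knot type as $\omega$.

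\textbf{Thickness lower bound (main obstacle).} I would use the characterization of thickness as the minimum of two quantities: the minimal radius of curvature, and half the minimal doubly critical self-distance. The curvature part is immediate, since the arcs have radius $r$ and the straight pieces have zero curvature. For the distance part, take a doubly critical pair $(s,t)$. If both points lie in a single corner ball, or on adjacent pieces, then the chord is perpendicular to the curve at both ends. The local geometry of the quarter-circle model then bounds the chord from below by a constant multiple of $r$; alternatively, such pairs can be excluded outright. Otherwise the two points lie on non-adjacent edges, possibly after rounding. By hypothesis their $\rho$-neighborhoods are disjoint, and rounding moves points by at most $O(r)$. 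Choosing $r=\rho/10$ then gives distance at least $2\rho-O(r)$, which is bounded below by a constant depending only on $\rho$. This case analysis is the delicate part. I would handle it with explicit coordinates in the lattice, exploiting the fact that only finitely many local configurations, up to isometry, occur inside a ball of radius $2\rho$. Combining the cases gives $\Thi(\widetilde\gamma)\ge c_\rho:=\min\{r,\ \kappa\rho\}$ for an explicit $\kappa>0$, which completes the proof.
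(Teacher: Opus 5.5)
Your proposal is correct and follows the paper's proof. In both, you round each corner by the fixed convention and use the $\rho$-thick neighborhood control twice: to see that the rounding is an isotopy, and to bound the thickness of the rounded curve below by a constant depending only on $\rho$ and the convention; you then rescale, with the length change controlled per corner. Your quarter-circle choice even gives $\Len(\widetilde\gamma)\le N$ in $\mathbb Z^3$.

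One small fix: to get $\Thi(\gamma_\omega)=1$ exactly, rescale by $1/\Thi(\widetilde\gamma)$ rather than by $1/c_\rho$. Since $\Thi(\widetilde\gamma)\ge c_\rho$, the bound $\Len(\gamma_\omega)\le N/c_\rho$ is unaffected.
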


\begin{proof}
Inside each corner ball replace the two incident straight subarcs by the fixed
rounded arc prescribed by the smoothing convention, and shorten the adjacent
straight portions accordingly.  The \(\rho\)-thick hypothesis gives disjoint
control neighborhoods for non-adjacent edges and for the corner balls, so these
local replacements cannot create a new intersection between non-adjacent parts of
the curve.  Hence the rounded curve is ambient isotopic to the original lattice
polygon.  The same neighborhood control gives a positive lower bound, depending
only on \(\rho\) and the smoothing convention, for the reach of the rounded curve.
After scaling so that the thickness is \(1\), the length is multiplied by a
constant depending only on that lower bound.  Since the original polygon has
\(N\) unit edges and the smoothing changes length by a bounded amount per corner,
the final length is at most \(A_\rho N\).
\end{proof}

Thus there is a coarse comparison map on the \(\rho\)-thick subfamily,
\[
\mathcal P_N^{\mathcal L,\rho}(K)
\longrightarrow
Y_{A_\rho N}(K),
\]
where \(\mathcal P_N^{\mathcal L,\rho}(K)\) denotes the set of \(\rho\)-thick
lattice polygons of length at most \(N\).  No such map is asserted for arbitrary
non-thick lattice polygons.

\subsection{What should not be claimed}

The preceding comparison does not imply that
\[
G_N^{\mathcal L,\mathcal M}(K)
\]
is equivalent to
\[
Y_\Lambda(K).
\]
There are several reasons:
\begin{enumerate}[label=(\roman*)]
\item lattice polygons form only a discrete subset of all thick representatives;
\item local lattice moves do not encode all continuous deformations at a fixed scale;
\item the relation between lattice length and ropelength is coarse and depends on the smoothing convention;
\item merge scales in the lattice model may depend on the lattice and move system.
\end{enumerate}

Therefore the lattice model is best regarded as a computable surrogate rather
than a canonical discretization.

\subsection{A convergence problem}

One may nevertheless ask whether finer and finer lattice models approximate the
continuous theory.

\begin{problem}[Convergence of discrete merge scales]
Let \(a>0\) be a mesh size and consider lattice polygons in \(a\mathbb Z^3\) with
a suitable discrete thickness condition.  Define lattice merge scales after
normalizing length by \(a\).  Under what conditions do the resulting discrete
merge scales converge, or at least give upper and lower bounds, for the continuous
ropelength merge scales as \(a\to 0\)?
\end{problem}

This problem is central if one wants to promote discrete knot theory from a
computable surrogate to an approximation theory for ropelength-filtered knot
spaces.  The difficulty is twofold.  First, the vertex sets must approximate
thick smooth knots while retaining a uniform discrete thickness condition.
Second, the move system must approximate controlled ambient isotopies without
introducing artificial barriers or shortcuts.  Possible approaches include using
thickness-preserving cubical moves, proving rounding-and-straightening estimates
with explicit reach control, or comparing upper and lower merge barriers through
coarse Lipschitz maps between the lattice and ropelength filtrations.

\section{Computability}

At each fixed level, the discrete theory is finite.

\begin{theorem}[Finite computability]
Let \(\mathfrak D(K)=(\mathcal S(K),\mathcal M,\ell)\) be a filtered local-move
model.  For each \(N\), the following data are computable by finite graph
procedures, provided the finite set \(\mathcal S_N(K)\) and the move relation are
effectively enumerated:
\begin{enumerate}[label=(\roman*)]
\item the graph \(G_N^{\mathfrak D}(K)\);
\item its connected components \(\pi_0(G_N^{\mathfrak D}(K))\);
\item the image of each component under the inclusion to higher levels;
\item the merge scale of any two initial components, up to any prescribed search bound.
\end{enumerate}
\end{theorem}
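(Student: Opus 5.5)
The plan is to treat each item as a finite graph computation, using only finiteness of \(\mathcal S_N(K)\) (guaranteed by the definition of a filtered local-move model) and the effectiveness hypothesis. For (i), enumerate the finite list \(\mathcal S_N(K)=\{x_1,\dots,x_n\}\). For each of the finitely many unordered pairs \((x_i,x_j)\), decide whether they differ by one move in \(\mathcal M\); this decision is what ``the move relation is effectively enumerated'' provides. For a concrete lattice system such as BFACF, one would instead list the finitely many local moves applicable to each \(x_i\) and keep those landing in \(\mathcal S_N(K)\). This yields the adjacency list of \(G_N^{\mathfrak D}(K)\) in finitely many steps.

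For (ii), run breadth-first search or union--find on this finite graph. That terminates and returns the partition \(\pi_0(G_N^{\mathfrak D}(K))\). For (iii), given \(M\ge N\), build \(G_M^{\mathfrak D}(K)\) by (i) and its components by (ii). Since \(\mathcal S_N(K)\subset\mathcal S_M(K)\) and the graphs are nested as induced subgraphs, the image of a component \(C\) is the component of \(G_M^{\mathfrak D}(K)\) containing any chosen vertex of \(C\). This is well defined by the Component persistence proposition, and the composite of the successive maps \(\pi_0(G_N)\to\pi_0(G_{N+1})\to\cdots\) agrees with the map induced by the direct inclusion by functoriality.

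For (iv), the phrase ``up to any prescribed search bound'' should be read precisely. Given initial components \(C,D\) and a bound \(B\), compute their images at each level \(N=N_0^{\mathfrak D}(K),\dots,B\) by (iii). Output the least \(N\) at which the images coincide, or report \(m_{\mathfrak D}(C,D)>B\) if none does. Monotonicity (once merged, always merged, again by persistence) justifies that this first hit is exactly \(m_{\mathfrak D}(C,D)\) whenever it is \(\le B\).

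The main obstacle is conceptual rather than technical. Without a bound, the procedure is only semi-decidable in general: it halts if the merge scale is finite, and runs forever if it is \(\infty\). The statement is deliberately phrased to avoid claiming more. When \(m_{\mathfrak D}(C,D)<\infty\) is known in advance, as for BFACF by the cited ergodicity theorem, unbounded search terminates and computes \(m_{\mathfrak D}\) exactly, but no a priori bound on the running time is asserted.
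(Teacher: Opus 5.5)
Your proposal is correct and follows the paper's proof essentially step for step: finiteness plus effective enumeration gives the graph, breadth-first search gives \(\pi_0\), images at higher levels are read off by identifying the same vertices in the nested graphs, and merge scales up to a bound \(B\) are found by scanning \(N_0\le N\le B\) for the first level at which the two images coincide. Your additional remarks, namely the explicit pairwise adjacency test and the observation that unbounded search is only semi-decidable unless finiteness of the merge scale is known in advance (e.g.\ via BFACF ergodicity), are accurate refinements that the paper leaves implicit.
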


\begin{proof}
The graph is finite by assumption.  Connected components are computable by
standard breadth-first or depth-first search.  The inclusion maps are obtained by
identifying the same vertices in the nested graphs.  To compute merge scales up
to a search bound \(B\), compute the components of \(G_N^{\mathfrak D}(K)\) for
\(N_0\leq N\leq B\) and record the first level at which the two initial components
have the same image.
\end{proof}

\begin{remark}
The theorem is an effective finiteness statement rather than an efficient
algorithmic complexity theorem.  Enumeration of lattice knots of length at most
\(N\) is itself difficult for large \(N\).  Nonetheless, the problem is finite
and well-defined, in contrast with the direct computation of path components of
\(Y_\Lambda(K)\).
\end{remark}

\section{A prototype implementation}

The preceding computability statement is abstract.  For the purposes of
experimentation, we implemented a small Python prototype for the simple cubic
lattice.  The program is not intended as an optimized enumeration engine; rather,
it is a transparent implementation of the definitions above.  Its role is to
construct the BFACF-filtered graph from a supplied finite list of lattice
polygons, or to explore the reachable subgraph from specified seed polygons.

The prototype represents a lattice polygon as a cyclic list of vertices in
\(\mathbb Z^3\).  It canonicalizes polygons up to translation, cyclic
reparametrization, reversal of orientation, and proper cubic rotations.  If one
wishes to identify mirror images, the full cubic symmetry group may be used
instead.  It then generates three BFACF-type moves:
\begin{enumerate}[label=(\roman*)]
\item length-preserving corner flips;
\item positive edge expansions, increasing lattice length by \(2\);
\item negative edge contractions, decreasing lattice length by \(2\).
\end{enumerate}
Only moves whose outputs remain self-avoiding lattice polygons are retained.
Thus, for a supplied finite vertex set \(V\subset \mathcal P_N^{\SC}(K)\), the
program constructs the induced graph
\[
G_N^{\SC,\BFACF}(K)|_V.
\]
It then computes connected components by breadth-first search.

The following command explores the reachable BFACF graph from the square unknot
up to length \(8\):
\begin{verbatim}
python lattice_bfacf_graph.py --demo-unknot --max-len 8
\end{verbatim}
The output of this elementary test is:
\begin{verbatim}
Demo: square unknot, BFACF reachable graph up to length 8
vertices: 18
edges:    31
components: 1
component sizes: [18]
\end{verbatim}
This confirms, at the level of the prototype, that the reachable subgraph from
the square unknot remains connected through length \(8\).

For non-trivial knots, the program expects an external enumeration of lattice
polygons.  For example, if \texttt{trefoil24.json} contains a list of length
\(24\) trefoil polygons, then the induced graph at the minimal level is built by
\begin{verbatim}
python lattice_bfacf_graph.py --input trefoil24.json --max-len 24
\end{verbatim}
If \texttt{trefoil\_seed.json} contains one or more seed polygons, then the
reachable graph below length \(26\) can be explored by
\begin{verbatim}
python lattice_bfacf_graph.py --input trefoil_seed.json --max-len 26 --explore
\end{verbatim}
Thus the implementation separates two tasks.  The enumeration of all polygons of
a given knot type and length is supplied externally, while the construction of
the filtered BFACF graph, the computation of connected components, and the
calculation of merge data are performed by the program.  In the version used for
the longer braid-generated seed below, canonicalization is implemented through
cyclic edge sequences rather than by enumerating all translated cyclic vertex
lists; this gives the same canonical quotient in the examples considered here
and makes longer seed computations feasible.

\begin{remark}
This separation is important for the trefoil.  The complete enumeration of all
length \(24\) trefoils in the simple cubic lattice is a substantial lattice-knot
enumeration problem.  The present code is designed to use such data once supplied;
it does not, by itself, enumerate all minimal trefoils from scratch.
\end{remark}

For the experiments reported below, we used the input files listed in
Table~\ref{tab:input-files}.

\begin{table}[ht]
\centering
\small
\begin{tabular}{lll}
\toprule
file & role & knot type or experiment\\
\midrule
\texttt{trefoil\_seed.json} & seed polygon & $3_1$\\
\texttt{figure8\_seed.json} & seed polygon & $4_1$\\
\texttt{figure8\_mirror\_seed.json} & reflected seed polygon & $4_1!$\\
\texttt{figure8\_and\_mirror\_seeds.json} & two-seed input & $4_1$ and $4_1!$\\
\texttt{six3\_seed.json} & minimal seed polygon & $6_3$\\
\texttt{six3\_mirror\_seed.json} & reflected seed polygon & $6_3!$\\
\texttt{six3\_and\_mirror\_seeds.json} & two-seed input & $6_3$ and $6_3!$\\
\texttt{six3b\_seed.json} & independent second seed & $6_3$ (braid + BFACF descent)\\
\texttt{six3b\_mirror\_seed.json} & reflected second seed & $6_3!$\\
\texttt{six3b\_and\_mirror\_seeds.json} & two-seed input & $6_3$ second-seed check\\
\texttt{five1\_seed.json} & seed polygon & $5_1$\\
\texttt{five2\_braid\_seed.json} & stress-test seed & non-minimal $5_2$ input\\
\texttt{granny\_seed.json} & sanity-check seed & $3_1\#3_1$\\
\texttt{square\_seed.json} & sanity-check seed & $3_1\#3_1!$\\
\texttt{granny\_square\_seeds.json} & two-seed input & composite-trefoil sanity check\\
\bottomrule
\end{tabular}
\caption{Input files used in the seed-generated experiments.  The $6_3$
seed is converted from the published minimal-coordinate data cited in the text.
The \(5_2\) braid seed is a non-minimal stress-test input and is not used as
birth-level evidence for \(5_2\).}
\label{tab:input-files}
\end{table}

Unless explicitly stated otherwise, computations were run without the
\texttt{--mirror} option, so that mirror symmetries are not identified.  This
convention is essential when one wants to compare mirror-sensitive examples.

\section{Reproducible seed-generated experiments}

This section reports small reproducible computations with the prototype code.
The purpose is not to enumerate complete minimal layers, except where explicitly
stated.  Instead, the computations test seed-generated BFACF subgraphs and
provide finite certificates for separation or merging at specified length levels.
A table entry marked ``seed-generated'' is exhaustive only for the component
generated from the supplied seed under the stated length bound.  A table entry
with a cap is a partial exploration and is not used as evidence for global
connectivity.

\subsection{The unknot}

For the unknot, the minimal lattice layer consists of shortest lattice unknots.
In the simple cubic lattice these are square polygons of length \(4\), up to
lattice isometry.  The prototype computation gives a basic check: the
BFACF-reachable subgraph from the square unknot through length \(8\) has \(18\)
vertices, \(31\) edges, and one connected component.

\subsection{The trefoil in the simple cubic lattice}

The first non-trivial example is the trefoil \(3_1\).  It is known that every
non-trivial polygon in the simple cubic lattice has length at least \(24\), and
that the trefoil is realized at this length
\cite{DiaoMinimal,DiaoSmallest,MinimalKnotsRechnitzer}.  Hence
\[
 n_{\SC}(3_1)=24.
\]
Known enumeration data for minimal simple cubic trefoils show that, counting both
chiralities in the standard population convention, there are \(3328\) trefoil
polygons of length \(24\), falling into \(142\) symmetry classes.  The next two
even levels are already much larger; see Table~\ref{tab:trefoil-enumeration}.

\begin{table}[ht]
\centering
\begin{tabular}{ccc}
\toprule
\(N\) & \(P_N(3_1)\) & \(S_N(3_1)\)\\
\midrule
24 & 3328 & 142\\
26 & 281208 & 11721\\
28 & 14398776 & 599949\\
\bottomrule
\end{tabular}
\caption{Enumeration data for simple cubic trefoils quoted from the lattice-knot
literature.  These are not new computations in the present paper.}
\label{tab:trefoil-enumeration}
\end{table}

The prototype gives the seed-generated checks summarized in
Table~\ref{tab:seed-basic}.  Starting from the supplied 24-edge trefoil seed, the
BFACF-reachable subgraph with \(\ell\le 24\) has \(56\) canonical vertices,
\(127\) edges, and one connected component.  Raising the bound to \(26\) and
stopping after \(1000\) vertices gives a partial exploration with \(2164\) edges
and one connected component.  These numbers do not enumerate the entire minimal
layer \(I_{\SC}(3_1)\); they describe one seed-generated BFACF component.

\subsection{The figure-eight knot in the simple cubic lattice}

For the figure-eight knot \(4_1\), the minimal simple cubic lattice length is
\[
 n_{\SC}(4_1)=30.
\]
Published enumeration data give
\[
 P_{30}(4_1)=3648,\qquad S_{30}(4_1)=152
\]
\cite{Scharein2009,MinimalKnotsRechnitzer,HongNoOh}.  The published list is compatible with the mirror-sensitive quotient used here:
translation, cyclic reparametrization, traversal reversal, and proper cubic
rotations are identified, while reflections are retained.  Scharein et al. also
enumerated minimal lattice knots and partitioned them into classes under
length-preserving (type-0) BFACF moves \cite{Scharein2009}.  Hence the existence
of nontrivial structure inside the minimal layer is part of the established
lattice-knot literature.  Our question is the next minimax one: at what higher
length cap do the complete birth-level components merge under the full BFACF move
set?

Using the published $152$ minimal classes as a complete birth-level list, our
type-$0$ graph has exactly four components
\[
 C_1,C_2,C_3,C_4,
 \qquad
 (|C_1|,|C_2|,|C_3|,|C_4|)=(58,58,18,18).
\]
The mirror involution exchanges $C_1\leftrightarrow C_2$ and
$C_3\leftrightarrow C_4$.  The two $58$-vertex components have $118$ type-$0$
edges each and the two $18$-vertex components have $26$ type-$0$ edges each.
An exhaustive BFACF search under the cap $N=32$, started independently from a
$58$-vertex component and from an $18$-vertex component, gives the same bounded
state set of $19618$ canonical polygons, consisting of all $152$ length-$30$
minimal classes and $19466$ length-$32$ states.  Therefore all four birth-level
components lie in a single level-$32$ component.

\begin{table}[ht]
\centering
\begin{tabular}{ccccccc}
\toprule
\(K\) & \(n_{\SC}(K)\) & level \(N\) & mode & cap & \(|V|\) & \(|E|\)\\
\midrule
\(3_1\) & 24 & 24 & seed-generated & none & 56 & 127\\
\(3_1\) & 24 & 26 & capped & 1000 & 1000 & 2164\\
\(4_1\) & 30 & 30 & seed-generated & none & 58 & 118\\
\(4_1\) & 30 & 32 & capped & 1000 & 1000 & 2137\\
\(6_3\) & 40 & 40 & seed-generated & none & 7 & 6\\
\(6_3\) & 40 & 42 & seed-generated & none & 12337 & 34146\\
\(5_1\) & 34 & 34 & seed-generated & none & 5 & 4\\
\(5_1\) & 34 & 36 & capped & 1000 & 1000 & 2207\\
\bottomrule
\end{tabular}
\caption{Seed-generated and capped BFACF computations used in the paper.  The
capped rows are partial explorations.  The $6_3$ row at $N=42$ is the complete
component generated from one supplied seed; its reflected component is
isomorphic and disjoint at this level.}
\label{tab:seed-basic}
\end{table}

The comparison between the \(3_1\) and \(4_1\) birth-level seeds suggests that
local BFACF structure and global minimal-layer population are different kinds of
data: the known birth layers have very different enumeration sizes, but the
chosen seed components are of comparable size.

\subsection{The figure-eight knot and its mirror}

The figure-eight knot is amphichiral: topologically, \(4_1\) is equivalent to
its mirror image \(4_1!\).  This makes it a useful test case for separating
ordinary topological equivalence from connectivity inside a length-filtered
lattice move graph.  The distinction is move-system-dependent: the calculation
below concerns BFACF paths, not arbitrary lattice isotopies.

Starting from the 30-edge simple cubic lattice representative used above, we
formed a reflected mirror seed by applying the lattice reflection
\[
 (x,y,z)\longmapsto (-x,y,z).
\]
We then explored the BFACF-reachable graph generated by the two seeds under the
minimal length bound \(\ell\leq 30\).  When mirror symmetries are not identified,
the computation gives \(116\) vertices, \(236\) edges, and two components of sizes
\(58\) and \(58\).  If mirror symmetries are identified during canonicalization,
the same input gives \(58\) vertices, \(118\) edges, and one component.  These two
checks are summarized in Table~\ref{tab:mirror}.

\begin{table}[ht]
\centering
\begin{tabular}{cccccc}
\toprule
input & mirror identified? & \(N\) & \(|V|\) & \(|E|\) & \(\#\pi_0\)\\
\midrule
\((4_1,4_1!)\) & no & 30 & 116 & 236 & 2\\
\((4_1,4_1!)\) & yes & 30 & 58 & 118 & 1\\
\bottomrule
\end{tabular}
\caption{Mirror comparison for the supplied figure-eight seed and its reflected
mirror seed.}
\label{tab:mirror}
\end{table}

\paragraph{A seed-generated merge-scale check and certificate.}
The preceding computation shows that the two seed-generated BFACF components do
not merge at level \(N=30\).  Since BFACF moves change lattice length by
\(0,\pm2\), the next possible BFACF length level is \(N=32\).  The supplementary
script \texttt{merge\_check\_bfacf.py} performs a bidirectional search between two
supplied seeds.  For the figure-eight seed and its reflected mirror seed, the
level \(30\) check returns \texttt{connected=false} with status
\texttt{frontier\_exhausted}.  This status means that the two seed-generated
frontiers are exhausted under the length bound \(N\leq30\), not that a state or
time cap was reached.  With mirror symmetries identified, the level \(30\) check
returns \texttt{connected=true} with status \texttt{same\_canonical\_state}.

At the candidate merge level \(N=32\), the command recorded in the supplementary
README returned
\[
\texttt{connected=true},\qquad
\texttt{status=connecting\_state\_found}.
\]
The JSON summary records \(3036\) discovered states from the first search
frontier, \(10741\) discovered states from the second frontier, and \(11761\)
expanded states.  The first two numbers count discovered states in the two
frontiers at the stopping time, whereas the last number counts states actually
removed from a queue and expanded; they are therefore not expected to add up.  The
recorded runtime for this run was \(851.825\) seconds.  Runtime is
hardware-dependent; the supplementary README records the Python/Linux
configuration used for the archived run.

In addition to the connectivity summary, we extracted an explicit BFACF path
realizing the merge at level \(N=32\).  The path consists of \(21\) states and
\(20\) BFACF moves.  It passes through a \(32\)-edge connecting state \(\eta\): the
original \(30\)-edge seed \(\omega\) reaches \(\eta\) in \(5\) BFACF moves, while
the reflected mirror seed \(\omega!\) reaches the same state \(\eta\) in \(15\)
BFACF moves.  The edge-number sequence along the extracted full path is
\[
30,32,32,32,32,32,32,32,32,32,30,
32,32,32,32,32,32,30,30,30,30.
\]
Thus the maximum length along the path is \(32\), giving an explicit certificate
for the seed-generated merge at \(N=32\); see Figure~\ref{fig:figure8-merge-certificate}.

\begin{figure}[ht]
\centering
\includegraphics[width=\textwidth]{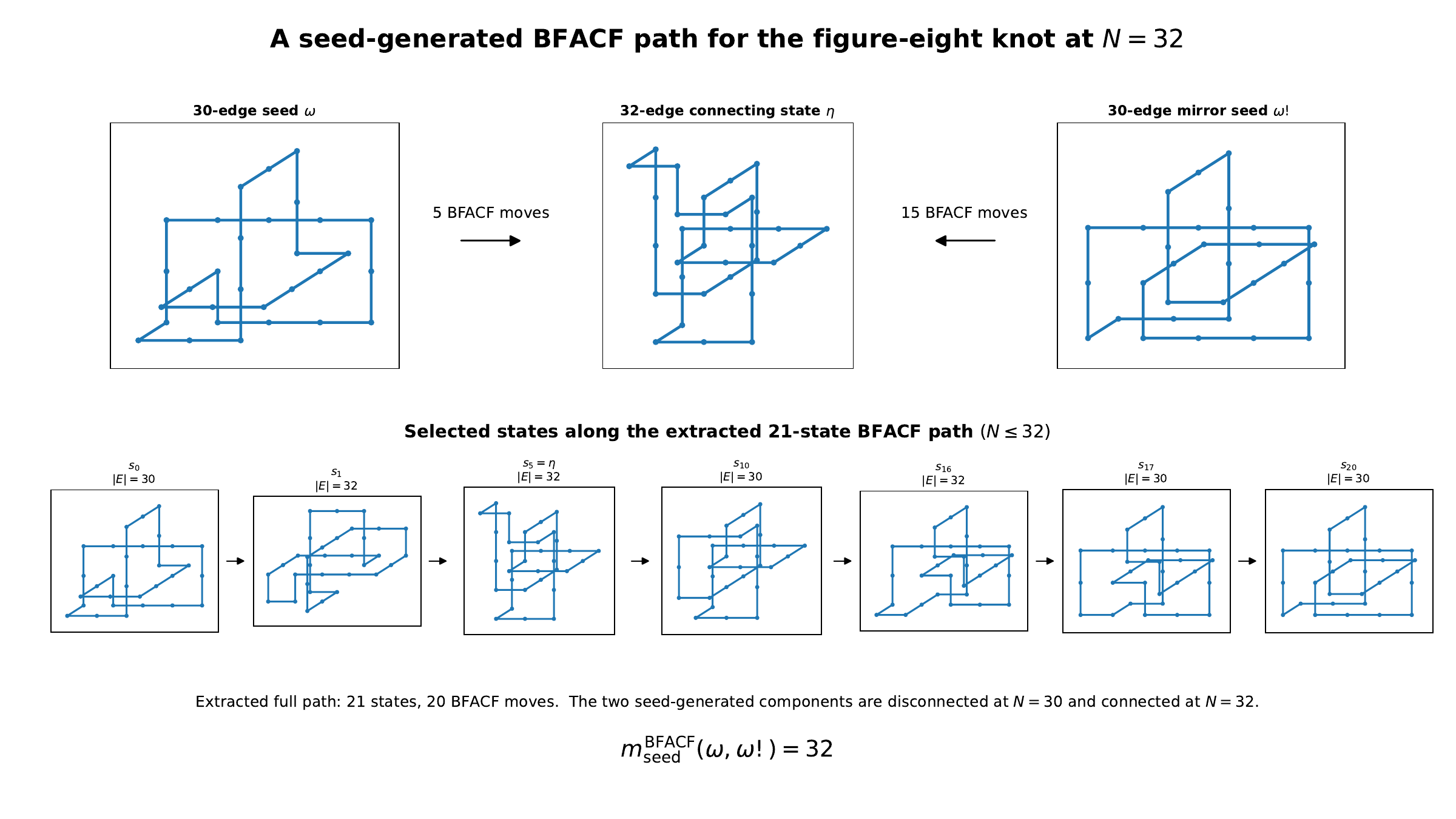}
\caption{An extracted BFACF path connecting the supplied \(30\)-edge
figure-eight seed \(\omega\) to its reflected mirror seed \(\omega!\) under the
length bound \(N=32\).  The two search branches meet at a \(32\)-edge state
\(\eta\).  The seed \(\omega\) reaches \(\eta\) in \(5\) BFACF moves, while the
mirror seed \(\omega!\) reaches \(\eta\) in \(15\) BFACF moves.  Hence the full
extracted path has \(21\) states and \(20\) moves, and gives an explicit
certificate for \(m_{\seed}^{\BFACF}(\omega,\omega!)=32\).}
\label{fig:figure8-merge-certificate}
\end{figure}

Since the two seed-generated BFACF components are separated at \(N=30\), and
since there is no intermediate BFACF length level, we obtain the exact
seed-generated BFACF merge scale
\[
\boxed{m_{\seed}^{\BFACF}(\omega,\omega!)=32.}
\]

\begin{observation}[A length-two BFACF mirror barrier for the figure-eight seed]
For the supplied 30-edge simple cubic figure-eight seed \(\omega\) and its
reflected mirror seed \(\omega!\), with mirror symmetries not identified, the
seed-generated BFACF merge scale is
\[
 m_{\seed}^{\BFACF}(\omega,\omega!)=32.
\]
Equivalently, the two seeds are separated at the minimal level \(N=30\) and
become connected after allowing two additional lattice edges.
\end{observation}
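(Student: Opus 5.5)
The proof has two halves: an upper bound from an explicit path and a lower bound from exhaustive separation. The merge scale is a first-level minimum, and both seeds have length 30. So it suffices to show two things: \(\omega\) and \(\omega!\) lie in different components of the BFACF graph at level \(N=30\), and they lie in the same component at some level \(N\le 32\). The intermediate level \(31\) needs no separate treatment. Every BFACF move changes length by \(0\) or \(\pm2\), so any polygon reachable from a 30-edge seed has even length. Hence the component of \(\omega\) at level \(31\) coincides with its component at level \(30\).

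For the lower bound, I would use the exhaustive two-seed exploration at \(N=30\) with mirror images not identified. I would run a breadth-first search from \(\omega\) under the cap \(\ell\le30\) until the frontier is empty, using only the length-preserving corner flips, since these are the only moves available at the cap. This is the exhaustive, uncapped computation of \(C_{30}^{\seed}(\omega)\), giving \(58\) canonical states and \(118\) edges, and likewise for \(\omega!\). The union has \(116\) vertices, so the two canonical vertex sets are disjoint. Because no vertex or time cap was hit (status \texttt{frontier\_exhausted}), this is a genuine separation certificate in the sense of the seed-generated merge-scale definition, and therefore \(m_{\seed}^{\BFACF}(\omega,\omega!)>30\).

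One point needs care here: canonicalization must quotient only by proper rotations, translations, cyclic shifts and reversal. If it also quotiented by reflections, \(\omega\) and \(\omega!\) would be identified trivially, which is exactly the collapse seen in Table~\ref{tab:mirror}.

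For the upper bound, I would exhibit the extracted path of \(21\) states and \(20\) moves. Each consecutive pair must be checked to differ by a single BFACF move and every state must be self-avoiding. The edge counts along the path are all at most \(32\), so every state is a vertex of \(G_{32}^{\SC,\BFACF}\) and the path lies inside that graph. This gives \(m_{\seed}\le32\).

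I expect the main obstacle to be certifying the separation step rather than the path. The path is a finite object that an independent verifier can check move by move. The separation claim, by contrast, depends on the move generator enumerating \emph{all} corner flips and on canonicalization neither merging distinct states nor splitting equal ones. I would address this by recomputing the \(58\)-state component with an independent implementation. I would also cross-check it against the type-0 decomposition of the published \(152\) minimal classes, where \(\omega\) and \(\omega!\) should fall into the mirror-exchanged pair \(C_1,C_2\) of sizes \(58,58\).
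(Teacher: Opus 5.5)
Your proposal is correct and follows essentially the same route as the paper: exhaustive, frontier-exhausted separation of the two $58$-state seed components at $N=30$ (union of $116$ vertices in two components), no intermediate BFACF length level, and the explicit $21$-state, $20$-move path with maximum length $32$ as the upper bound. Restricting to length-preserving moves at the cap is harmless, but ruling out contractions strictly uses $n_{\SC}(4_1)=30$; your cross-check against the $58,58,18,18$ type-$0$ decomposition goes beyond what the paper uses for this observation.
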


The preceding seed certificate is a particular path realizing the nontrivial
height in the complete minimal-layer hierarchy.  The exhaustive computation gives
the following global statement.

\begin{theorem}[Complete minimal-layer BFACF merge tree for $4_1$]
Under the mirror-sensitive proper-cubic quotient used above, the $152$ minimal
simple-cubic representatives of $4_1$ form four type-$0$ BFACF components of
sizes $58,58,18,18$ at length $30$.  All four components merge into one component
under the cap $N=32$.  Consequently, for minimal representatives $P,Q$,
\[
 H_{\SC,\BFACF}(P,Q)=
 \begin{cases}
 30,& P,Q\text{ lie in the same birth-level component},\\
 32,& P,Q\text{ lie in distinct birth-level components},
 \end{cases}
\]
and the corresponding excess-length barrier is $0$ or $2$.
\end{theorem}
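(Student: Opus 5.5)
The plan is to reduce the theorem to three finite facts and then read off $H_{\SC,\BFACF}$ via the merge ultrapseudometric. Here $H(P,Q)$ is the minimax height: the least cap $N$ such that $P$ and $Q$ lie in the same component of $G_N^{\SC,\BFACF}(4_1)$.

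\textbf{Step 1: the birth layer.} I take the published list of $152$ minimal classes as the complete vertex set $\mathcal P_{30}^{\SC}(4_1)=I_{\SC}(4_1)$ in the mirror-sensitive proper-cubic quotient \cite{Scharein2009,MinimalKnotsRechnitzer}. Since $n_{\SC}(4_1)=30$, there are no polygons of length below $30$. A length-$30$ vertex can therefore only be joined, inside $G_{30}$, to another length-$30$ vertex by a neutral (type-$0$) move. So $G_{30}$ is exactly the type-$0$ graph on these $152$ classes.

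\textbf{Step 2: the level-$30$ components.} I canonicalize each class, generate all corner flips, and run breadth-first search. This should return four components of sizes $58,58,18,18$, with the reflection $(x,y,z)\mapsto(-x,y,z)$ exchanging them in pairs. Exhaustiveness of this step is automatic, because the vertex set is finite and completely supplied. The consequence is $H(P,Q)=30$ for $P,Q$ in the same component. It also gives $H(P,Q)\ge 31$ for $P,Q$ in different components.

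\textbf{Step 3: connectivity at $N=32$.} BFACF moves change length by $0$ or $\pm2$, and every polygon in $\mathbb Z^3$ has even length. Hence $G_{31}$ has the same vertices and edges as $G_{30}$, so no merge can occur at $31$. For the upper bound I run an exhaustive bounded search under cap $32$. It starts from one vertex of a $58$-component and, separately, from one vertex of an $18$-component, and is run until the frontier is exhausted rather than stopped by a state cap. Both runs should produce the same state set of $19618$ canonical polygons, and that set should contain all $152$ minimal classes.

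Equality of the two explored sets gives two things. First, a $58$-component and an $18$-component lie in one level-$32$ component. Second, every birth-level class lies in it, so all four components merge there. Reflection-equivariance of BFACF would independently give the mirror partners, and the seed certificate for $\omega,\omega!$ in the Observation above would independently cross-check the $58\leftrightarrow58$ merge. This yields $H(P,Q)=32$ for $P,Q$ in distinct components, and the excess barrier $H-30\in\{0,2\}$ follows. This is consistent with the merge ultrapseudometric corollary: all off-diagonal distances equal $2$, giving the tree $4\to1$.

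\textbf{Main obstacle.} The hard part is the correctness and completeness of the level-$32$ search. Canonicalization must be exact: translation, cyclic shift, reversal and the $24$ proper rotations must be quotiented, but reflections must not be. Every BFACF move must be generated at every edge, with self-avoidance checked correctly, so that exhaustion of the frontier genuinely certifies the full component. I would address this with an independent second implementation and by archiving explicit connecting paths between representatives of each pair of components. Those paths can be checked move by move, so the positive merge claim does not rely on the search code alone.
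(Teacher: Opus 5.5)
Your proposal is correct and follows essentially the same route as the paper. The paper likewise takes the $152$ published minimal classes as the complete birth layer and computes the type-$0$ components of sizes $58,58,18,18$. It then runs exhaustive cap-$32$ searches from a $58$-component and an $18$-component, obtaining the same $19618$-state set containing all $152$ minimal classes. Your parity remark (every simple-cubic polygon has even length, so $G_{31}=G_{30}$) is just a more explicit form of the paper's observation that BFACF has no intermediate length level between $30$ and $32$.
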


\begin{figure}[ht]
\centering
\includegraphics[width=.78\textwidth]{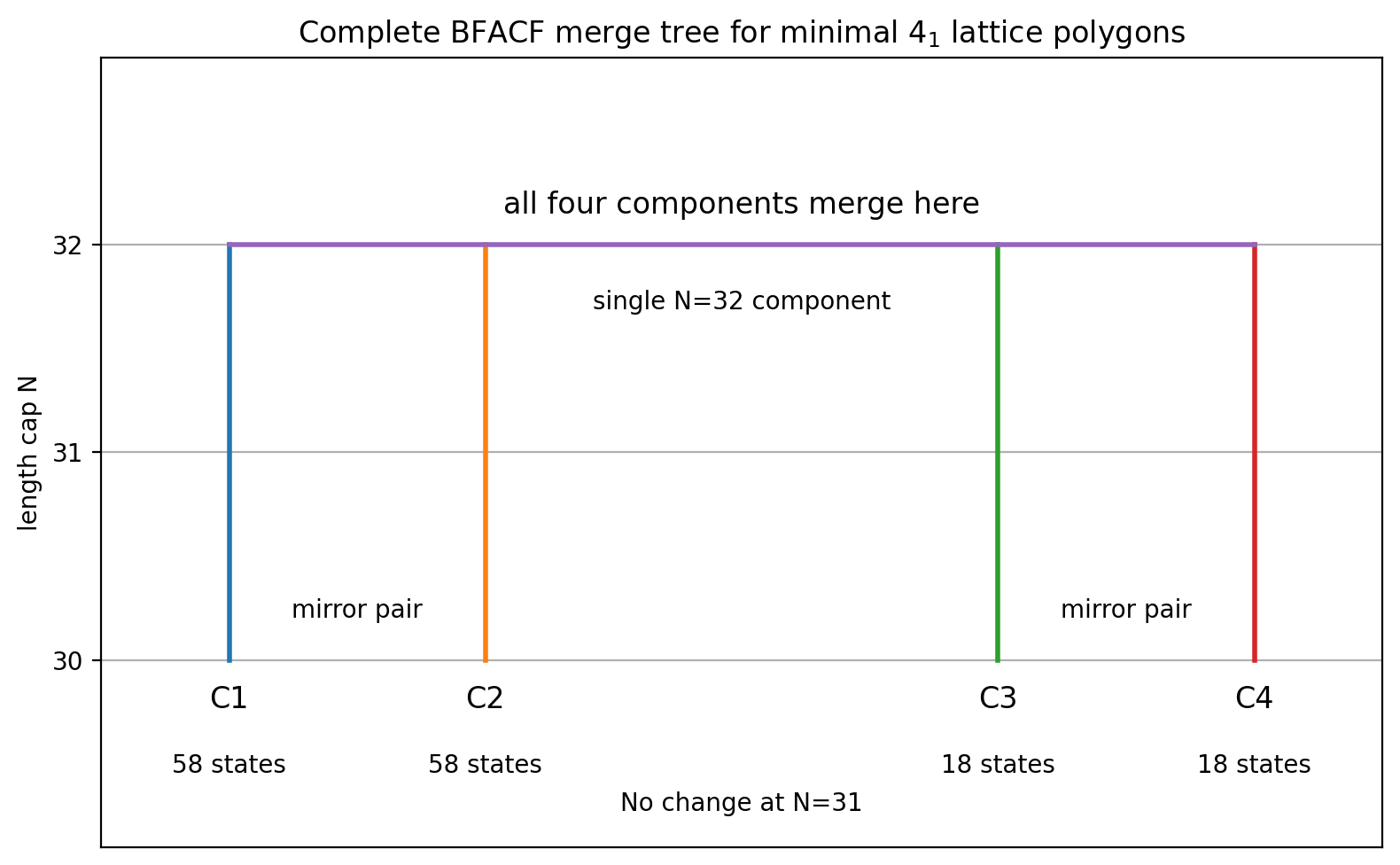}
\caption{Complete BFACF merge tree of the minimal simple-cubic $4_1$ layer.
The four type-$0$ components at $N=30$ have sizes $58,58,18,18$ and merge
simultaneously at $N=32$.  The paired $58$- and $18$-vertex components are
exchanged by reflection.}
\label{fig:figure8-complete-merge-tree}
\end{figure}

This theorem is global for the published minimal $4_1$ layer under the specified
BFACF move system and quotient.  A different lattice or a different complete
local-move system may produce a different minimax hierarchy.

\subsection{The amphichiral knot \(6_3\) and its mirror}

The knot \(6_3\) is also amphichiral.  Its minimal simple cubic lattice length is
\[
 n_{\SC}(6_3)=40,
\]
with published minimal population and symmetry-class counts
\[
 P_{40}(6_3)=3552,\qquad S_{40}(6_3)=148
\]
\cite{MinimalKnotsRechnitzer}, so the birth layer of \(6_3\) has almost exactly
the same enumeration size as that of \(4_1\).
For this experiment we converted the 40-vertex coordinate list on Rechnitzer's
published \emph{Minimal \(6_3\) knots} data page into
\texttt{six3\_seed.json}; the original coordinate file, its checksum, and a
provenance note are included in the supplementary archive
\cite{MinimalKnotsRechnitzer,Rechnitzer63Data}.  Let \(\sigma\) denote this
40-edge seed and let \(\sigma!\) be the reflected seed obtained from
\[
 (x,y,z)\longmapsto(-x,y,z).
\]
The script \texttt{verify\_seed\_pair.py} checks that both inputs are valid
40-edge simple cubic polygons, are distinct under proper cubic symmetry, and
become equal when reflections are allowed.  In addition, the script
\texttt{verify\_knot\_type.py} verifies the knot type itself: it computes the
Alexander polynomial of a lattice polygon from a generic planar projection,
and applied to \(\sigma\) and \(\sigma!\) it returns
\[
\Delta(t)\doteq t^{2}-3t+5-3t^{-1}+t^{-2},
\]
the Alexander polynomial of \(6_3\), which distinguishes \(6_3\) from every
other knot with at most six crossings.  The same script reproduces the
expected Alexander polynomials of all previously supplied seeds (\(3_1\),
\(4_1\), \(4_1!\), \(5_1\), and the composite examples), which validates the
verifier itself; the outputs are archived as JSON verification files.

The complete published minimal layer gives substantially more information than
the initial two-seed comparison.  At $N=40$ the $148$ minimal classes split into
twelve type-$0$ BFACF components with sizes
\[
39,39,10,10,9,9,7,7,6,6,3,3.
\]
They occur in six mirror pairs.  We label the birth components
\[
A_{39},A_{10},A_9,A_7,A_6,A_3,
\qquad
B_{39},B_{10},B_9,B_7,B_6,B_3,
\]
where the subscript is the component size and the labels are chosen so that the
reflection
\[
\mu(x,y,z)=(-x,y,z)
\]
induces
\[
\mu(A_r)=B_r,\qquad \mu(B_r)=A_r
\qquad
(r\in\{39,10,9,7,6,3\}).
\]
Thus the mirror pairing is
\[
A_{39}\leftrightarrow B_{39},\quad
A_{10}\leftrightarrow B_{10},\quad
A_9\leftrightarrow B_9,\quad
A_7\leftrightarrow B_7,\quad
A_6\leftrightarrow B_6,\quad
A_3\leftrightarrow B_3.
\]
The supplied seed $\sigma$ lies in $A_7$, after relabelling if necessary, and
its reflected seed $\sigma!$ lies in $B_7$.  Notice that $6_3$ is amphichiral,
so $\sigma$ and $\sigma!$ represent the same knot type even though the
mirror-sensitive bounded move graph need not connect them at a given length
cap.

At $N=42$, exhaustive search from $\sigma$ yields exactly $12337$ bounded
canonical states, of which $74$ have length $40$ and $12263$ have length $42$.
The $74$ minimal states are precisely
\[
A_{39}\sqcup A_{10}\sqcup A_9\sqcup A_7\sqcup A_6\sqcup A_3.
\]
We denote this level-$42$ component by $A$.  An independent exhaustive search
from $\sigma!$ gives a second $12337$-state component $B$ whose minimal states
are precisely
\[
B_{39}\sqcup B_{10}\sqcup B_9\sqcup B_7\sqcup B_6\sqcup B_3.
\]
The two bounded state sets are disjoint, and reflection is an automorphism of
the BFACF graph preserving lattice length, so
\[
\mu(A)=B,\qquad \mu(B)=A.
\]
Therefore the twelve birth-level components have merged into exactly two
mirror-image level-$42$ components $A$ and $B$.

At \(N=44\), a deterministic C++17 implementation of the same BFACF moves and
canonicalization found a connecting state.  The search recorded \(1601979\)
discovered states from the first side, \(1028207\) from the second side, and
\(2555900\) expanded states before the intersection was found.  The recorded
runtime was \(347.037\) seconds, which is hardware-dependent.  The C++
accelerator was cross-checked against the Python implementation at \(N=40\) and
\(N=42\), and the final path was independently checked by
\texttt{verify\_merge\_certificate.py} using the Python neighbor generator.

The extracted certificate consists of \(153\) states and \(152\) BFACF moves.
The two branches meet at a 42-edge state \(\eta\): the seed \(\sigma\) reaches
\(\eta\) in \(116\) moves, while \(\sigma!\) reaches it in \(36\) moves.  Some
intermediate states have \(44\) edges, and no state has more than \(44\) edges.
Figure~\ref{fig:six3-merge-certificate} displays the endpoints, the meeting
state, and selected states along the archived path.

\begin{table}[ht]
\centering
\small
\begin{tabular}{clp{5.0cm}l}
\toprule
level & result & exact evidence & principal size data\\
\midrule
40 & separated & complete two-seed graph & $14$ vertices; $7+7$\\
42 & separated & complete graph; frontiers exhausted & $24674$ vertices; $12337+12337$\\
44 & connected & verified explicit path & $153$ states; $152$ moves\\
\bottomrule
\end{tabular}
\caption{Seed-generated BFACF mirror analysis for the supplied minimal
$6_3$ seed and its reflected mirror.  Reflections are not identified.}
\label{tab:six3-mirror}
\end{table}

\begin{figure}[ht]
\centering
\includegraphics[width=\textwidth]{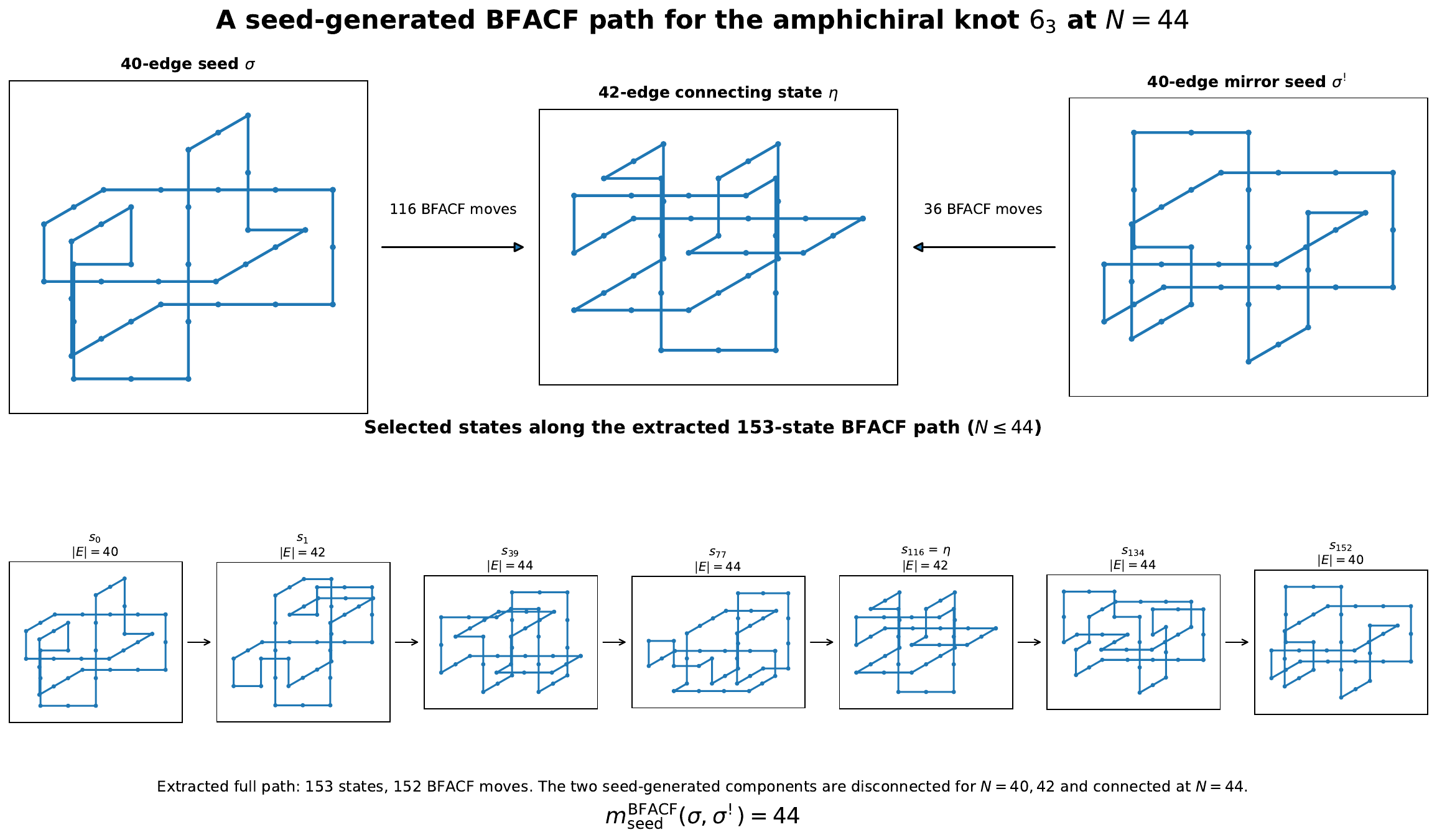}
\caption{An extracted BFACF path connecting the supplied 40-edge $6_3$ seed
$\sigma$ to its reflected mirror seed $\sigma!$ under the length bound $N=44$.
The two search branches meet at a 42-edge state $\eta$.  The seed reaches
$\eta$ in $116$ moves and the mirror seed reaches it in $36$ moves.  The full
certificate has $153$ states and $152$ moves, and its maximum edge number is
$44$.}
\label{fig:six3-merge-certificate}
\end{figure}

Because BFACF changes length by \(0\) or \(\pm2\), the exact separation at
\(N=42\) and the verified path at \(N=44\) imply
\[
 \boxed{m_{\seed}^{\BFACF}(\sigma,\sigma!)=44.}
\]

\begin{observation}[A length-four BFACF mirror barrier for the supplied $6_3$ seed]
For the supplied 40-edge minimal simple cubic $6_3$ seed \(\sigma\) and its
reflected mirror \(\sigma!\), with mirror symmetries not identified, the
seed-generated BFACF merge scale is
\[
 m_{\seed}^{\BFACF}(\sigma,\sigma!)=44.
\]
Equivalently, the two seeds remain separated after allowing two extra edges and
become connected after allowing four extra edges.
\end{observation}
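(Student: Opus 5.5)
The Observation is a lower bound together with an upper bound, and the plan is to prove the two separately. The lower bound says \(\sigma\) and \(\sigma!\) lie in different components of \(G_{42}^{\SC,\BFACF}(6_3)\). The upper bound says they lie in the same component of \(G_{44}^{\SC,\BFACF}(6_3)\). Parity then closes the gap: BFACF moves change length by \(0\) or \(\pm 2\), and both seeds have even length \(40\). So every state reachable from either seed has even length, and no cap strictly between \(42\) and \(44\) gives a different graph. Since the component of \(\sigma\) at a cap \(N\) only grows with \(N\) (component persistence), separation at \(N=42\) also gives separation at \(N=40\). The first merging level is therefore exactly \(44\).

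For the lower bound, I would run an exhaustive breadth-first search from \(\sigma\) in the bounded BFACF graph with cap \(\ell\le 42\), under the mirror-sensitive proper-cubic quotient. The search runs to frontier exhaustion, with no vertex cap, and yields the \(12337\)-state component \(A\). Since \(C_{42}^{\seed}(\sigma)=A\) exactly, it suffices to check that the canonical form of \(\sigma!\) does not occur in \(A\). To get the same conclusion more symmetrically, I would use that the reflection \(\mu\) is a length-preserving automorphism of the BFACF graph. It maps equivalence classes under proper rotations to such classes, because \(\mu\) normalizes the proper cubic group. Hence \(B=\mu(A)\) is the component of \(\sigma!\). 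Then \(A\cap B=\varnothing\) reduces to checking that no canonical state of \(A\) is the canonical form of a reflection of a state of \(A\). This check is finite and mechanical, and I would cross-check it with the independent C++ search.

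For the upper bound, I would take the archived certificate of \(153\) states. The verifier checks three things: each state is a self-avoiding simple-cubic polygon of length at most \(44\); consecutive states are related by a single BFACF move, checked with the Python neighbor generator up to canonicalization; and the endpoints canonicalize to \(\sigma\) and \(\sigma!\). Every BFACF move preserves knot type, so every state in the path is a \(6_3\) polygon. The path therefore lies in \(G_{44}^{\SC,\BFACF}(6_3)\), and \(m_{\seed}^{\BFACF}(\sigma,\sigma!)\le 44\).

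The main obstacle is making the lower bound rigorous. Its correctness depends on the exhaustiveness and correctness of the move generator and of the canonicalization. A generator that misses a move could hide a connection, and a canonicalization that wrongly identifies a state with its mirror would be a bug. I would address this by cross-validation: two independent implementations must agree on the state counts at \(N=40\) and \(N=42\), the count \(74\) of minimal states must match the union of the \(A_r\) from the type-\(0\) decomposition, and the result must agree with the second seed \(6_3\) check.
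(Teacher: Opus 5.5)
Your proposal is correct and follows the paper's argument. The ingredients are the same: exact separation at $N=42$ via the frontier-exhausted $12337$-state component of $\sigma$, which is disjoint from its mirror component $B=\mu(A)$; the independently verified $153$-state BFACF certificate at $N=44$; and the fact that BFACF moves change length only by $0,\pm2$, which rules out intermediate levels. The only cosmetic difference is that the paper obtains $B$ from a second exhaustive search starting at $\sigma!$ instead of reflecting $A$, and this is equivalent because $\mu$ is a length-preserving automorphism of the BFACF graph.
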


The seed certificate realizes the final merge in the complete minimal-layer
hierarchy.  Combining the exhaustive $N=40$ and $N=42$ computations with the
verified $N=44$ path gives the following global statement.

\begin{theorem}[Complete minimal-layer BFACF merge tree for $6_3$]
Under the mirror-sensitive proper-cubic quotient, the $148$ minimal simple-cubic
representatives of $6_3$ form twelve type-$0$ BFACF components at length $40$,
with sizes
\[
39,39,10,10,9,9,7,7,6,6,3,3.
\]
With the notation above, at $N=42$ the six components $A_r$ merge into one
component $A$ and the six components $B_r$ merge into one component $B$.  The
reflection $\mu$ exchanges $A_r$ with $B_r$ and exchanges the two level-$42$
components $A$ and $B$.  Each of $A$ and $B$ contains $74$ minimal classes and
$12337$ bounded states, and they are disjoint at $N=42$.  These two components
merge at $N=44$.  Hence the complete component hierarchy is
\[
12\longrightarrow 2\longrightarrow 1
\qquad (N=40,42,44).
\]
For minimal representatives, the excess-length barrier is $0$ within a
birth-level component, $2$ between distinct birth-level components among the
$A_r$'s or among the $B_r$'s, and $4$ between any $A_r$ and any $B_s$.  In
particular, each mirror pair $A_r,B_r$ is separated through $N=42$ and merges
only at $N=44$.
\end{theorem}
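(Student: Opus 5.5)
The proof combines three finite computations with a symmetry argument and the parity of BFACF length changes.

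\emph{Step 1: the birth level $N=40$.} Take the published $148$ minimal classes as the complete vertex set of $G_{40}^{\SC,\BFACF}(6_3)$. At this level only length-preserving (type-$0$) moves survive, since a $+2$ move leaves the cap and a $-2$ move would produce a polygon of length $38<n_{\SC}(6_3)$. Build the induced graph and compute its components by breadth-first search; this yields the twelve sizes. Then apply $\mu$ to every vertex and re-canonicalize. Reflection commutes with the BFACF move rules and preserves length, so $\mu$ is a graph automorphism. Reading off the induced permutation of components justifies the labels with $\mu(A_r)=B_r$, and shows that no component is $\mu$-invariant.

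\emph{Step 2: the level $N=42$.} Run an exhaustive BFACF search from $\sigma\in A_7$ under the cap $42$. The output must be certified as \emph{frontier exhausted}, not stopped by a state cap. Intersecting the resulting $12337$ states with the length-$40$ layer gives $74$ classes. Check that these are exactly the union of the six $A_r$, so all $A_r$ lie in one level-$42$ component $A$.

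By the automorphism from Step 1, $B:=\mu(A)$ is the component of $\sigma!$; it has $12337$ states and contains exactly the six $B_r$. Since $A$ and $B$ are components of the same graph, they are either equal or disjoint. To show they are disjoint it suffices to check $\sigma!\notin A$, or equivalently that $B_7\cap A=\varnothing$, which is read off from the minimal states of $A$. Every level-$42$ vertex reachable from the birth layer therefore lies in $A\sqcup B$, giving $\#\pi_0=2$ on the image of the birth layer.

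\emph{Step 3: the level $N=44$.} Verify the archived $152$-move path from $\sigma$ to $\sigma!$ state by state. For each consecutive pair, check that the second state appears in the neighbor set generated from the first, that every state is self-avoiding, and that every length is at most $44$. This places $A$ and $B$ in one level-$44$ component, so all twelve birth components merge there.

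\emph{Step 4: the barriers.} BFACF changes length only by $0$ or $\pm2$, and all lengths are even, so there are no intermediate levels between $40$, $42$ and $44$. The merge scales are then read from the three steps:
\begin{itemize}[label=--]
\item $40$ within a birth component;
\item $42$ for distinct $A_r,A_s$ or distinct $B_r,B_s$, which are separated at $40$ by Step 1 and joined at $42$ by Step 2;
\item $44$ for any pair $A_r,B_s$, which are separated at $42$ by Step 2 and joined at $44$ by Step 3.
\end{itemize}
Subtracting $n_{\SC}(6_3)=40$ gives the excess-length barriers $0$, $2$ and $4$, and the hierarchy $12\to2\to1$. This is consistent with the merge ultrapseudometric of Theorem 3.3 in the form of Corollary 4.6.

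\emph{Main obstacle.} The hardest point is Step 2: proving separation at level $42$ requires a complete search. One must ensure that canonicalization is correct, so that no two distinct states are merged and none is missed, since an error could fake a disconnection. I would address this by cross-checking two independent implementations (Python and C++), by confirming that the search from $\sigma!$ yields exactly the image $\mu(A)$, and by checking that the $74$ minimal states recovered agree with the published $148$-class list.
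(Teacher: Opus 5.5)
Your proposal is correct and follows essentially the paper's argument: the type-$0$ decomposition of the published $148$ classes at $N=40$, and a frontier-exhausted search from $\sigma$ at $N=42$ whose $74$ minimal states are exactly the six $A_r$. It then uses reflection as a length-preserving BFACF automorphism, the independently verified $152$-move path at $N=44$, and the evenness of lattice lengths to rule out intermediate levels. The only difference is cosmetic: you obtain $B=\mu(A)$ and its disjointness from $A$ via equivariance and the equal-or-disjoint dichotomy for components. The paper instead also runs a second exhaustive search from $\sigma!$ and checks disjointness of the two $12337$-state sets directly.
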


\begin{figure}[ht]
\centering
\includegraphics[width=.96\textwidth]{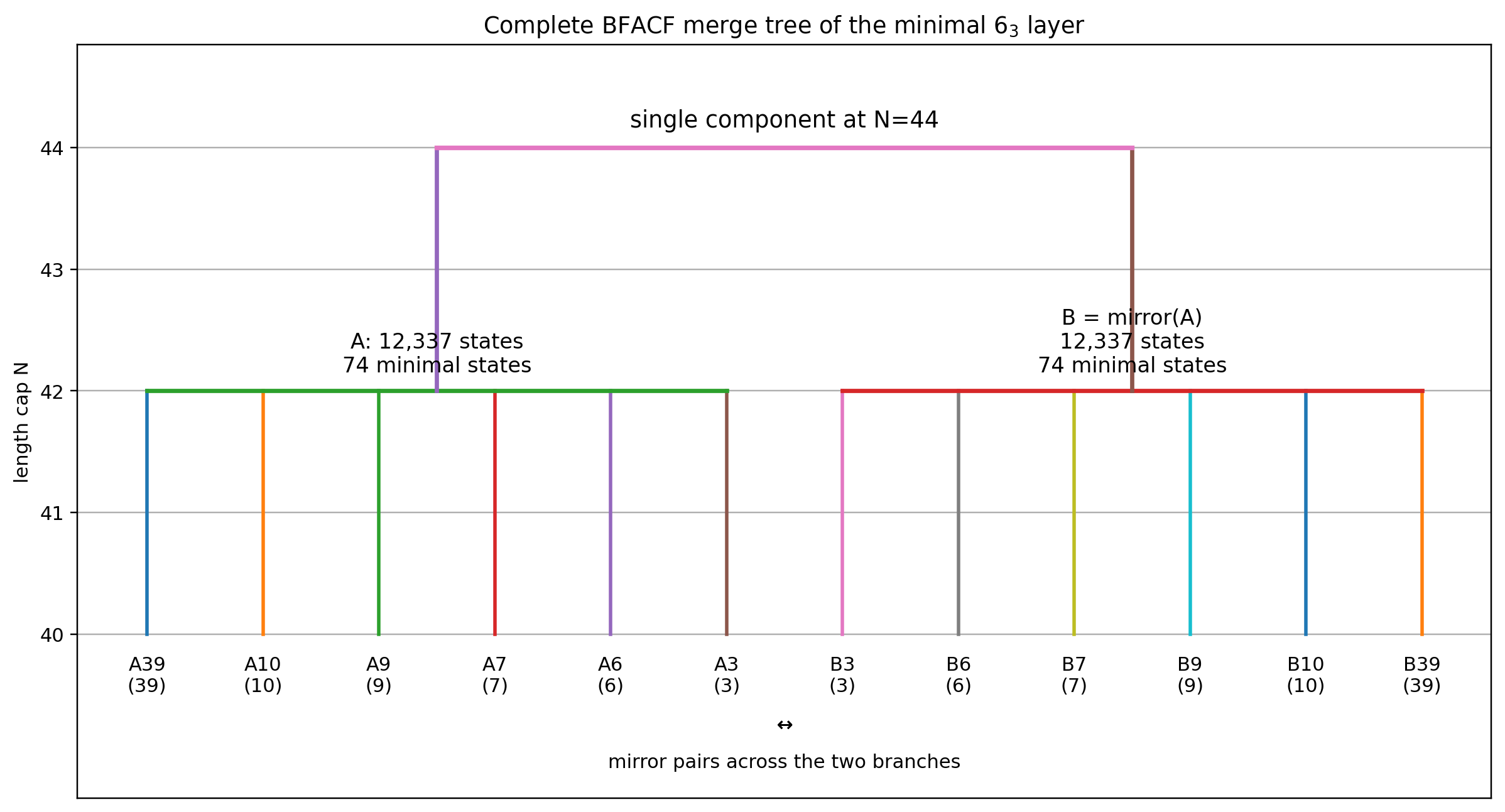}
\caption{Complete BFACF merge tree of the minimal simple-cubic $6_3$ layer.
At $N=40$ the twelve type-$0$ components form the six mirror pairs
$A_r\leftrightarrow B_r$ for $r=39,10,9,7,6,3$.  At $N=42$ the six $A_r$
components merge to $A$ and the six $B_r$ components merge to
$B=\mu(A)$; the two mirror-image branches merge at $N=44$.}
\label{fig:six3-complete-merge-tree}
\end{figure}

This hierarchy is global for the published minimal $6_3$ layer under the
specified BFACF move system and quotient.  Its values are not asserted to be
invariant under a change of lattice or move system.

\paragraph{An independent second seed.}
To reduce dependence on any single seed, on the published coordinate data, and
on any single implementation, the archive contains a second minimal 40-edge
\(6_3\) seed \(\sigma'\) constructed without external coordinate data.  The
script \texttt{make\_six3\_seed.py} builds a \(156\)-edge cubical closed-braid
representative of \(6_3\) from the standard \(3\)-strand braid word
\(\sigma_1^{-1}\sigma_1^{-1}\sigma_2\sigma_1^{-1}\sigma_2\sigma_2\) and
shortens it to length \(40\) by a seed-recorded stochastic BFACF descent;
every intermediate step is a valid BFACF move, so the knot type is preserved,
and \texttt{verify\_knot\_type.py} confirms the \(6_3\) Alexander polynomial
for the braid polygon, the seed, and its reflected mirror.  The pair
\((\sigma',\sigma'!)\) repeats the entire analysis with an independent
pure-Python search implementation (\texttt{merge\_check\_bfacf\_fast.py},
whose canonicalization is validated against the reference script by a built-in
self-test and which reproduces the \(4_1\) results): the pair is separated at
\(N=40\) and at \(N=42\), and connected at \(N=44\) by an archived and
independently verified \(157\)-state, \(156\)-move certificate, giving
\(m_{\seed}^{\BFACF}(\sigma',\sigma'!)=44\) as well.  The exhausted level-42
component of \(\sigma'\) contains exactly \(12337\) states, matching the
complete enumeration above, and this is no coincidence: an archived and
verified \(15\)-move BFACF path at \(N\leq 42\) connects \(\sigma'!\) to
\(\sigma\), so \(\sigma'\) lies in the level-42 component of \(\sigma!\) and
\(\sigma'!\) in that of \(\sigma\).  Thus two independently constructed
minimal seeds, processed by two independent implementations (C++ and Python),
certify the same merge event at \(N=44\) from opposite sides of the same pair
of level-42 components.

\subsection{The five-crossing knots}

The two prime knots with five crossings have the same crossing number but occupy
different levels in the simple cubic lattice filtration.  Published enumeration
data give
\[
 n_{\SC}(5_1)=34,\qquad n_{\SC}(5_2)=36,
\]
and minimal populations
\[
 P_{34}(5_1)=6672,
 \qquad
 P_{36}(5_2)=114912
\]
\cite{MinimalKnotsRechnitzer}.  Thus the birth level already distinguishes the
two five-crossing prime knots in the simple cubic lattice filtration.

For \(5_1\) we used a 34-edge minimal simple cubic lattice representative.  The
birth-level seed-generated graph is very small: it has \(5\) vertices, \(4\)
edges, and one component.  This indicates that the supplied minimal \(5_1\)
seed is locally quite rigid at the birth level: only a few length-preserving
BFACF corner changes are available before one leaves the length-34 layer.
Allowing length up to \(36\) and stopping after \(1000\) vertices gives a
partial exploration with \(2207\) edges and one component; see
Table~\ref{tab:seed-basic}.

For \(5_2\), the supplied test seed is not a minimal 36-edge representative.
Instead, it is a cubical closed-braid seed of length \(238\).  We now treat this
calculation only as an implementation stress test, not as evidence about the
birth layer of \(5_2\).  With length bound \(238\) and cap \(1000\), the explored
subgraph has \(1000\) vertices, \(1043\) edges, and one component.  A genuine
birth-level experiment for \(5_2\) would require a curated list of minimal
36-edge representatives and should replace this stress test in a future version.

\subsection{Composite trefoil examples as a sanity check}

We also made a very small sanity check using two non-minimal composite trefoil
seeds: the granny knot \(3_1\#3_1\) and the square knot \(3_1\#3_1!\).  These
knots are topologically distinct, for example by chirality-sensitive polynomial
invariants \cite{Jones}.  The purpose of the present computation is not to prove
or rediscover this distinction.  It only checks that the implementation keeps two
known different seed types separated in a tiny capped exploration.

The seeds have length \(66\).  Running the two seeds separately with a cap of
\(20\) vertices gives one component in each explored subgraph.  Running the two
seeds together, without mirror identification and with a cap of \(40\) vertices,
gives \(40\) vertices, \(40\) edges, and two components of sizes \(29\) and
\(11\).  Because the cap is extremely small relative to the size expected at this
length, this check is not used as scientific evidence for a merge-scale claim.
It is included only as a reproducibility and consistency test.

\subsection{Computational interpretation: a minimax reconfiguration height}

For two lattice polygons \(P,Q\) connected by a chosen move system \(\mathcal M\),
define the minimax reconfiguration height
\[
 H_{\mathcal L,\mathcal M}(P,Q)
 =\min_{\Gamma:P\leadsto Q}\ \max_{R\in\Gamma}\ell(R),
\]
where the minimum runs over finite \(\mathcal M\)-paths \(\Gamma\) from \(P\) to
\(Q\).  The associated excess-length barrier is
\[
 B_{\mathcal L,\mathcal M}(P,Q)
 =H_{\mathcal L,\mathcal M}(P,Q)-\max\{\ell(P),\ell(Q)\}.
\]
For the standard simple-cubic BFACF system, global ergodicity guarantees that
\(H_{\SC,\BFACF}(P,Q)<\infty\) whenever \(P\) and \(Q\) have the same knot type
\cite{BFACF}.  If \(P,Q\) lie in the initial lattice layer, this minimax height is
exactly the first filtered level at which their initial components merge.

Thus the computations above may be stated succinctly as
\[
 B_{\SC,\BFACF}(\omega,\omega!)=2,
 \qquad
 B_{\SC,\BFACF}(\sigma,\sigma!)=4.
\]
This formulation makes the relation with earlier work transparent.  Minimal
lattice enumeration and type-0 BFACF classes describe the bottom layer
\cite{Scharein2009}; the present computation measures how far above that layer
one must climb before prescribed classes can be reconfigured into one another by
the full BFACF move set.

\section{Further questions}

The framework suggests several natural problems.

The computations also suggest the following problems and conjectures.

\begin{problem}[Distribution of BFACF mirror barriers]
For an amphichiral knot type \(K\), determine the set of values
\[
 B_{\SC,\BFACF}(P,r(P))
\]
as \(P\) ranges over minimal simple-cubic representatives and \(r\) ranges over
orientation-reversing cubic symmetries.  The complete trees above imply that,
under the present quotient and BFACF system, every minimal $4_1$ class is
separated from its reflected class by excess barrier $2$, while every minimal
$6_3$ class has reflected-class barrier $4$.  Are such mirror barriers uniformly
bounded in terms of crossing number or minimal lattice length?  Can they become
arbitrarily large for other amphichiral knot types?
\end{problem}

\begin{conjecture}[Birth level is only a first-order invariant]
For small prime knots, the simple cubic birth level \(n_{\SC}(K)\) gives a useful
coarse filtration, but the component structure of \(G_{n_{\SC}(K)}^{\SC,\BFACF}(K)\)
and the corresponding merge scales contain strictly finer information.
\end{conjecture}

\begin{problem}[Dependence on move system]
Compare the merge ultrapseudometrics obtained from BFACF-type moves and from
cubulated moves.  When do they define the same hierarchical clustering of the
minimal lattice layer?
\end{problem}

\begin{problem}[Dependence on lattice]
Compare the discrete merge scales obtained from the simple cubic, body-centered
cubic, and face-centered cubic lattices.  Which features are lattice-independent?
\end{problem}

\begin{problem}[Stable discrete merge scale]
Does some stabilization procedure, such as lattice refinement or controlled
subdivision, produce a merge invariant of \(K\) that is independent of the chosen
lattice?
\end{problem}

\begin{problem}[Comparison with ropelength]
Find explicit functions \(A,B\) such that lattice merge scales give upper or lower
bounds for ropelength merge scales:
\[
A\,m_{\mathcal L,\mathcal M}(C,D)
\leq
m_{\operatorname{rop}}(C',D')
\leq
B\,m_{\mathcal L,\mathcal M}(C,D),
\]
where \(C',D'\) are suitable continuous counterparts of \(C,D\).
\end{problem}

\begin{problem}[Finite recognition length from lattice graphs]
Use lattice-filtered move graphs to define a finite recognition length
\(L_{\operatorname{char}}^{\mathcal L}(K)\): the first lattice length level at
which a finite diagrammatic pattern recognizing \(K\) appears in the projection
of the lattice graph.
\end{problem}

\section*{Code and data availability}

A prototype Python implementation accompanies this paper.  The script constructs
BFACF-filtered move graphs from supplied finite lists of simple cubic lattice
polygons, computes connected components, and records summary data.  The
supplementary archive contains the main script \texttt{lattice\_bfacf\_graph.py},
the merge-check script \texttt{merge\_check\_bfacf.py}, the braid-seed helper
\texttt{make\_braid\_seed.py}, a README file with the reproduction commands, and
the JSON seed and summary files used in the experiments.

The supplied data include the trefoil seed; the figure-eight seed, its reflected
mirror seed, and the combined mirror-comparison input; the minimal $6_3$ seed,
its reflected mirror, the source-coordinate file and provenance record; the
\(5_1\) seed and the non-minimal braid-generated \(5_2\) stress-test seed; and
the granny/square knot sanity-check seeds.  The JSON summaries record the
computations reported in the text, including the mirror merge checks for $4_1$
at \(N=32\) and $6_3$ at \(N=40,42,44\).  The archive includes both explicit
merge certificates, their summary and verification files, and the certificate
figures used in Figures~\ref{fig:figure8-merge-certificate} and
\ref{fig:six3-merge-certificate}.  It also contains the deterministic C++17
accelerator \texttt{bfacf\_fast.cpp}, the generic verifier
\texttt{verify\_merge\_certificate.py}, and the figure-generation script.
The second-seed material comprises the seed-construction script
\texttt{make\_six3\_seed.py} (whose recorded RNG seed regenerates the archived
\texttt{six3b} seed), the knot-type verifier \texttt{verify\_knot\_type.py}
with its JSON verification outputs, the pure-Python optimized search script
\texttt{merge\_check\_bfacf\_fast.py}, the \texttt{six3b} seed and summary
files, the verified \(157\)-state \(N=44\) certificate
\texttt{six3b\_merge\_path\_44.json} with its own certificate figure, and the
verified \(15\)-move level-42 cross path \texttt{six3\_cross\_path\_42.json}
connecting the two seed pairs.

The implementation is intended as a transparent prototype rather than an
optimized enumeration engine.  Complete lists of minimal representatives are
supplied externally from the published lattice-knot data.  For the present
revision, the source package additionally contains
\path{figure8_minimal_components.csv},
\path{figure8_merge_tree_summary.txt}, and
\path{six3_minimal_component_summary.csv}, together with the two complete
merge-tree figures.  These record the global birth-level partitions and the
subsequent merge levels used in the two theorems above.  The earlier JSON summary
files retain the seed-generated and capped searches used for the explicit path
certificates.  The merge-check script returns one of three outcomes at a tested
level: a connecting state is found, one frontier is exhausted and hence the two
seeds are separated at that level, or a state/time cap is reached and the test is
inconclusive.

For the bidirectional merge script, the fields
\texttt{states\_from\_seed\_1} and \texttt{states\_from\_seed\_2} count discovered
states in the two search frontiers at the stopping time.  The field
\texttt{expanded} counts states actually expanded from the queues.  Thus these
quantities measure different aspects of the search and should not be added.  The
runtime field is hardware-dependent; the archived README records the Python and
machine information used for the supplied summaries.  The supplementary code and data used for the computations in this paper are
archived on Zenodo at DOI: \texttt{10.5281/zenodo.21304711}.  The same material
is also included in the arXiv ancillary files accompanying this preprint.

\section*{Declaration of generative AI and AI-assisted technologies}

During the preparation of this manuscript, the author used ChatGPT
(OpenAI, GPT-5.5 Thinking and GPT-5.6 Thinking) and Claude (Anthropic) for
language polishing, organizational suggestions,
preliminary consistency checks, assistance in drafting some explanatory text
and code comments, and assistance in preparing and running the \(6_3\)
computations and the associated verification scripts.  The mathematical
definitions, statements, proofs,
computations, references, code, and conclusions were checked and verified by the
author, who takes full responsibility for the content of the paper.

\section{Conclusion}

We have organized lattice-filtered move graphs as a finite-state experimental
model for quantitative reconfiguration of knot representatives.  The underlying
move-graph idea and the BFACF dynamics are classical; the contribution here is to
place them under an explicit length filtration and to compute the first level at
which prescribed components merge.  The resulting merge hierarchy is the usual
ultrametric structure associated with nested component partitions, while the
knot-theoretic content lies in the minimax heights and their finite certificates.

The main experimental conclusion is that standard BFACF dynamics exhibit exact
bounded-height reconfiguration barriers not visible from ordinary knot type alone.
The classical ergodicity theorem guarantees eventual connectivity; our complete
minimal-layer computations determine how much temporary length increase is
needed to merge the birth-level components for two amphichiral knots.  For
$4_1$, four components of sizes $58,58,18,18$ at $N=30$ merge simultaneously at
$N=32$.  For $6_3$, twelve components at $N=40$ occur as six explicit mirror pairs
$A_r\leftrightarrow B_r$; the six $A_r$ components merge to a branch $A$ at
$N=42$, the six $B_r$ components merge to its mirror branch $B=\mu(A)$, and
$A$ and $B$ merge at $N=44$.  Thus $4_1$ has a one-stage hierarchy with
barriers $0,2$, whereas $6_3$ has a genuine two-stage, mirror-symmetric hierarchy
with barriers $0,2,4$.  The archived seed-to-mirror paths
provide finite certificates for the maximal barriers, while exhaustive bounded
searches establish the global component partitions.  The next step is to repeat
this complete merge-tree analysis across further minimal knot types and to study
how the resulting hierarchies interact with geometric observables.  A companion
paper~\cite{OzawaDensityMerge} computes raw $p$-density and compression profiles
on the full minimal supports of the present $4_1$ and $6_3$ trees and on the
exhaustively enumerated bounded components at the first merge levels.

\end{document}